\title{Heights and totally $p$-adic numbers}
\date{\today}
\author{Lukas Pottmeyer}
\address{Fachbereich Mathematik, Universit\"at Basel, 4051 Basel}
\email{lukas.pottmeyer@unibas.ch}
\dedicatory{F\"ur Opa, der mich das Z\"ahlen lehrte}
\DeclareMathOperator{\Gal}{Gal}
\DeclareMathOperator{\PrePer}{PrePer}
\DeclareMathOperator{\Per}{Per}
\DeclareMathOperator{\Res}{Res}
\newtheorem{Theorem}{Theorem}[section]
\newtheorem{Corollary}[Theorem]{Corollary}
\newtheorem{Conjecture}[Theorem]{Conjecture}
\newtheorem{Proposition}[Theorem]{Proposition}
\newtheorem{Lemma}[Theorem]{Lemma}
\newtheorem{question}[Theorem]{Question}
\theoremstyle{definition}
\newtheorem*{Definition}{Definition}
\theoremstyle{remark}
\newtheorem{Example}[Theorem]{Example}
\newtheorem*{Remark}{Remark}
\begin{document}

\begin{abstract}
We study the behavior of canonical height functions $\widehat{h}_f$, associated to rational maps $f$, on totally $p$-adic fields. In particular, we prove that there is a gap between zero and the next smallest value of $\widehat{h}_f$ on the maximal totally $p$-adic field if the map $f$ has at least one periodic point not contained in this field. As an application we prove that there is no infinite subset $X$ in the compositum of all number fields of degree at most $d$ such that $f(X)=X$ for some non-linear polynomial $f$. This answers a question of W. Narkiewicz from 1963.
\end{abstract}

\subjclass[2010]{37P30, 11S82 (primary), 11R04 (secondary)}
\keywords{Height bounds, arithmetic dynamics, totally $p$-adic numbers}
\maketitle

\section{Introduction}

We fix an algebraic closure $\overline{\mathbb{Q}}$ of the rational numbers, and all algebraic extensions of $\mathbb{Q}$ are assumed to lie in this algebraic closure.
Let $K$ be a number field with non-archimedean valuation $v$, and let $K^{tv}$ be the maximal totally $v$-adic field extension of $K$, that is, the maximal Galois extension of $K$ which can be embedded into $K_v$. Here and in the following, we denote by $K_v$ the completion of $K$ with respect to $v$.

We denote by $h$ the absolute logarithmic Weil height on $\mathbb{P}^1(\overline{\mathbb{Q}})$. For details on height functions we refer the reader to \cite{BG}. Bombieri and Zannier \cite{BZ01} have introduced the notion of the \emph{Bogomolov property} $(B)$ of a field $F\subseteq \overline{\mathbb{Q}}$. We say that such a field $F$ has property $(B)$ if there is a positive constant $c$ such that $h(\alpha)$ is either zero or bounded from below by $c$ for all $\alpha \in \mathbb{P}^1 (F)$. In the same paper Bombieri and Zannier proved that all fields $K^{tv}$ as above have this property.

More examples of fields with property $(B)$ are: the maximal totally real field extension $\mathbb{Q}^{tr}$ of the rationals (proved by Schinzel \cite{Sch73}), any abelian extension of a given number field (proved by Amoroso and Zannier \cite{AZ00}), and $\mathbb{Q}(E_{\rm tors})$ where $E$ is an elliptic curve defined over $\mathbb{Q}$ (proved by Habegger \cite{Ha13}).

In the last three decades the study of height functions associated to a rational function $f\in \overline{\mathbb{Q}}(x)$ of degree at least two has raised increasing interest. We denote such a height function by $\widehat{h}_f$. It is uniquely determined by the properties
\begin{align}\label{canonical height}
 \widehat{h}_f (f(\alpha))=\deg(f)\widehat{h}_f (\alpha)  \hspace{1.0cm} \text{ and } \hspace{1.0cm}  \widehat{h}_f = h + O(1),
 \end{align}
for all $\alpha\in\mathbb{P}^1(\overline{\mathbb{Q}})$ (see \cite{CS93}). We denote with $\Per(f)$ the set of periodic points of $f$ and with $\PrePer(f)$ the set of preperiodic points of $f$. A point is called \emph{preperiodic} if some iterate of this point is periodic. (An equivalent statement is that a point is preperiodic if and only if its forward orbit is finite). It is not hard to see that $\widehat{h}_f$ vanishes precisely on the set $\PrePer(f)$. For a proof and further properties we refer the reader to \cite[Chapter 3]{Si07}. Notice that $h=\widehat{h}_{x^d}$ for any integer $d \geq 2$.

For any rational function $f\in\overline{\mathbb{Q}}(x)$ of degree at least two we can study fields $F\subseteq \overline{\mathbb{Q}}$ with the \emph{Bogomolov property $(B)$ relative to $\widehat{h}_f$}. Here $F$ has property $(B)$ relative to $\widehat{h}_f$ if there is a positive constant $c$ such that $\widehat{h}_f (\alpha) \geq c$ for all $\alpha \in \mathbb{P}^1 (F)\setminus \PrePer(f)$.
Following Fili and Miner \cite{FM12} we say that such a field $F$ has the \emph{strong Bogomolov property $(SB)$ relative to $\widehat{h}_f$} if additionally there are only finitely many preperiodic points of $f$ in $\mathbb{P}^1 (F)$. Note that there are fields with property $(B)$ but without property $(SB)$ relative to $h=\widehat{h}_{x^2}$. For example the maximal abelian field extension $\mathbb{Q}^{ab}$ has property $(B)$ as mentioned above. However, it contains infinitely many roots of unity, which are the preperiodic points of $x^2$. Therefore, $\mathbb{Q}^{ab}$ does not have property $(SB)$.

Apart from the cases where $f$ is a power map, a Chebyshev polynomial or a Latt\`es map there are almost no non-trivial examples of fields $F \subseteq \overline{\mathbb{Q}}$ with property $(B)$ relative to $\widehat{h}_f$. One exception is the field $\mathbb{Q}^{tr}$. In \cite{Po13} we gave a complete classification of rational functions $f \in \overline{\mathbb{Q}}(x)$ such that $\mathbb{Q}^{tr}$ has property $(B)$ relative to $\widehat{h}_f$. This classification is according to whether the Julia set of $f$ is contained in the real line or not. Moreover, it is proven that $\mathbb{Q}^{tr}$ has property $(B)$ relative to $\widehat{h}_f$ if and only if it has property $(SB)$ relative to $\widehat{h}_f$.
In this paper we want to give such a classification result in the non-archimedean setting.

Let again $K$ be a number field with non-archimedean valuation $v$. Then we denote by $\mathbb{C}_v$ the completion of an algebraic closure of $K_v$. It is well known that $v$ extends uniquely to a valuation on $K_v$ and that this valuation extends uniquely to a valuation, which we again denote by $v$, on $\mathbb{C}_v$. The field $\mathbb{C}_v$ is algebraically closed and complete with respect to $v$.
The fields $\overline{\mathbb{Q}}$ and $K_v$ are subfields of $\mathbb{C}_v$, hence we can describe the field $K^{tv}$ as
\[
K^{tv}=\{\alpha \in \overline{\mathbb{Q}} \vert \sigma(\alpha) \in K_v \text{ for all } \sigma \in \Gal(\overline{\mathbb{Q}}/K)\}.
\]

We denote the classical $v$-adic Julia set by $J_v (f)$. Note that $J_v(f)$ might be empty. This is one of the great differences from rational dynamics on $\mathbb{P}^1 (\mathbb{C})$, where the Julia set is always an uncountable set.

The absolute Galois group $\Gal(\overline{\mathbb{Q}}/ \mathbb{Q})$ acts coefficientwise on $\overline{\mathbb{Q}}(x)$. Hence, for any $f\in\overline{\mathbb{Q}}(x)$ and any $\sigma\in\Gal(\overline{\mathbb{Q}}/ \mathbb{Q})$, we have a unique rational function $\sigma(f)$ defined by this action.

The main part of this paper is to prove the following theorem. Although we state it in the classical non-archimedean language, the proof relies heavily on the theory of Berkovich spaces.

\begin{Theorem}\label{totv}
Let $K$ be a number field with a non-archimedean valuation $v$, and let $f\in\overline{\mathbb{Q}}(x)$ be a rational function of degree at least two. The following conditions are equivalent, and each implies the Bogomolov property of $K^{tv}$ relative to $\widehat{h}_f$:

\begin{itemize}
\item[(i)] There exists a $\sigma \in \Gal(\overline{\mathbb{Q}}/K)$ such that $J_v (\sigma(f)) \not\subset \mathbb{P}^{1}(K_v)$ or $J_v (\sigma(f))=\emptyset$.
\item[(ii)] $\vert \PrePer(f) \cap \mathbb{P}^1 (K^{tv}) \vert < \infty$.
\item[(iii)] $ \Per(f) \not\subseteq \mathbb{P}^1 (K^{tv})$. 
\end{itemize}
If $f$ is a polynomial, then {\rm (i)}-{\rm (iii)} are also equivalent to
\begin{itemize}
 \item[(iv)] $\PrePer(f)\not\subseteq \mathbb{P}^1 (K^{tv})$. 
\end{itemize}
\end{Theorem}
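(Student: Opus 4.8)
The plan is to recast conditions (i)--(iv) as statements about the canonical measures $\mu_{\sigma f,v}$ on the Berkovich projective line $\mathbb{P}^1_{\mathrm{Berk},v}$ over $\mathbb{C}_v$, and to drive the proof with the equidistribution theorem for points of small canonical height. Two reductions come first. Since $K^{tv}$ is Galois over $K$ we have $\sigma(K^{tv})=K^{tv}$ for every $\sigma\in\Gal(\overline{\mathbb{Q}}/K)$, and $\sigma$ conjugates the dynamics of $f$ into that of $\sigma(f)$; hence $\Per(f)\subseteq\mathbb{P}^1(K^{tv})$ if and only if $\Per(\sigma f)\subseteq\mathbb{P}^1(K_v)$ for every $\sigma$ (and likewise for $\PrePer$), and $K^{tv}$ has property $(B)$ relative to $\widehat{h}_f$ if and only if it has property $(B)$ relative to $\widehat{h}_{\sigma f}$. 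Second, I use the dictionary between the classical and the Berkovich Julia set: writing $\mathcal{J}_v(g)=\operatorname{supp}\mu_{g,v}$, one has $J_v(g)=\mathcal{J}_v(g)\cap\mathbb{P}^1(\mathbb{C}_v)$; moreover $\mathcal{J}_v(g)$ consists of a single (type II) point exactly when $g$ has potentially good reduction, equivalently exactly when $J_v(g)=\emptyset$, and otherwise $\mathcal{J}_v(g)$ is the closure of the repelling periodic points, so that $\mathcal{J}_v(g)=\overline{J_v(g)}$ as soon as $J_v(g)\neq\emptyset$. Because $K_v$ is complete, its image in $\mathbb{P}^1_{\mathrm{Berk},v}$ satisfies $\overline{\mathbb{P}^1(K_v)}\cap\mathbb{P}^1(\mathbb{C}_v)=\mathbb{P}^1(K_v)$; combining this with the dictionary shows that, for a fixed $\sigma$, condition (i) is equivalent to $\operatorname{supp}\mu_{\sigma f,v}\not\subseteq\overline{\mathbb{P}^1(K_v)}$, the \emph{only} exception being the case where $\sigma f$ has potentially good reduction with its reduction point already lying in $\overline{\mathbb{P}^1(K_v)}$, which is treated by hand below.

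Now the implications. That (ii) implies (iii) is immediate: a map of degree at least two has infinitely many periodic points, and these are preperiodic, so $\Per(f)\subseteq\mathbb{P}^1(K^{tv})$ would make $\PrePer(f)\cap\mathbb{P}^1(K^{tv})$ infinite. For (i) $\Rightarrow$ (ii) and (i) $\Rightarrow$ $(B)$ I argue by contradiction: a counterexample produces pairwise distinct $\alpha_n\in\mathbb{P}^1(K^{tv})$, non-preperiodic (respectively preperiodic) for $f$, with $\widehat{h}_f(\alpha_n)\to 0$; Northcott's theorem forces $[\mathbb{Q}(\alpha_n):\mathbb{Q}]\to\infty$, so the equidistribution theorem of Baker and Rumely, Favre and Rivera-Letelier, Chambert-Loir and Yuan applies at the place $v$ to the conjugate $\sigma f$ furnished by (i): the normalized Galois-orbit measures converge weakly on $\mathbb{P}^1_{\mathrm{Berk},v}$ to $\mu_{\sigma f,v}$. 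But each $\alpha_n$ lies in $K^{tv}$, so all its conjugates lie in $\mathbb{P}^1(K_v)$ and these measures are supported on $\mathbb{P}^1(K_v)$, whereas $\operatorname{supp}\mu_{\sigma f,v}\not\subseteq\overline{\mathbb{P}^1(K_v)}$; hence some nonempty open set carries positive $\mu_{\sigma f,v}$-mass but no mass for any orbit measure, contradicting the portmanteau theorem. In the exceptional case $\sigma f$ becomes, after conjugation by a suitable $\varphi\in\operatorname{PGL}_2(K_v)$, a map $g_0$ of genuine good reduction; then $\PrePer(g_0)\cap\mathbb{P}^1(K_v)$ is finite by the standard reduction-injectivity argument, which gives (ii), while $\sigma f$ behaves $v$-adically like a power map, so the Bombieri--Zannier estimate for $K^{tv}$ relative to $h$ carries over to $\widehat{h}_{\sigma f}$ and gives $(B)$. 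Finally I close the cycle with (iii) $\Rightarrow$ (i): in contrapositive form, assuming every $\sigma f$ has nonempty Julia set contained in $\mathbb{P}^1(K_v)$, one must show $\Per(\sigma f)\subseteq\mathbb{P}^1(K_v)$ for every $\sigma$; the repelling cycles lie in $J_v(\sigma f)\subseteq\mathbb{P}^1(K_v)$, and the attracting and indifferent cycles are pushed into $\mathbb{P}^1(K_v)$ by a completeness argument on the corresponding Berkovich Fatou components.

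For $f$ a polynomial, (iii) $\Rightarrow$ (iv) is trivial, and for (iv) $\Rightarrow$ (iii) one argues by contraposition: if $\Per(f)\subseteq\mathbb{P}^1(K^{tv})$ then, by the equivalence just proved, every $\sigma f$ has nonempty Julia set inside $\mathbb{P}^1(K_v)$, and since $\infty$ is a totally invariant fixed point of the polynomial $\sigma f$, the whole filled Julia set of $\sigma f$ is then $K_v$-rational, so $\PrePer(\sigma f)\subseteq\mathbb{P}^1(K_v)$ for every $\sigma$ and hence $\PrePer(f)\subseteq\mathbb{P}^1(K^{tv})$. The real difficulty, and the genuinely new content of the proof, lies in the two situations where $\mu_{\sigma f,v}$ is concentrated on $\overline{\mathbb{P}^1(K_v)}$ and equidistribution is silent: controlling $\widehat{h}_{\sigma f}$ on $K^{tv}$ when $\sigma f$ has potentially good reduction, and proving the purely dynamical statement that a nonempty $K_v$-rational Julia set forces every (pre)periodic point into $\mathbb{P}^1(K_v)$. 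Both rest on a precise understanding of how $\overline{\mathbb{P}^1(K_v)}$ lies inside $\mathbb{P}^1_{\mathrm{Berk},v}$ and of the passage between the classical and the Berkovich Julia set, which is the technical thread running through the whole argument.
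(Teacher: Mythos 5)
Your overall plan is aligned with the paper's: use equidistribution for (i)\,$\Rightarrow$\,(ii) and the structure of the Berkovich Julia set for the converse. But two of your steps rely on assertions that are either unnecessary detours or genuine gaps.

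First, the ``exceptional case'' you carve out in (i)\,$\Rightarrow$\,(ii) never occurs. The Berkovich closure of $\mathbb{P}^1(K_v)$ inside $\mathbb{P}^{1,\mathcal{B}}_v$ is $\mathbb{P}^1(K_v)$ itself (it is already compact), and this set consists entirely of type~I points. A type~II reduction point therefore cannot lie in $\overline{\mathbb{P}^1(K_v)}$, so when $\sigma f$ has (potentially) good reduction the Berkovich Julia set is automatically \emph{not} contained in $\mathbb{P}^1(K_v)$ and the equidistribution argument applies verbatim --- no separate power-map or reduction-injectivity argument is needed. In short: the only case where equidistribution fails is exactly the contrapositive hypothesis of (iii)\,$\Rightarrow$\,(i), namely $\emptyset\neq J_v(\sigma f)\subseteq\mathbb{P}^1(K_v)$ for every $\sigma$.

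Second, and this is the real gap, in (iii)\,$\Rightarrow$\,(i) you assert that ``the attracting and indifferent cycles are pushed into $\mathbb{P}^1(K_v)$ by a completeness argument on the corresponding Berkovich Fatou components.'' No such argument is supplied, and I don't see how one would get this directly: a priori a polynomial over $K_v$ can have non-repelling cycles defined over a proper extension of $K_v$. The paper's mechanism is sharper. When $J_v(\sigma f)=J_v^{\mathcal{B}}(\sigma f)$ (Proposition~\ref{Juliaequal}), there are no repelling periodic points of type II/III/IV, so Rivera-Letelier's theorem (\cite[Proposition 10.101]{BR}) gives that $\sigma f$ has \emph{at most one} non-repelling periodic point in $\mathbb{P}^1(\mathbb{C}_v)$. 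Then, writing $\sigma f^{(n)}=g_n/h_n$ over $K_v$, the periodic points of period $n$ are the roots of $H_n(x)=g_n(x)-x\,h_n(x)\in K_v[x]$; all but at most one root lies in $K_v$, hence all roots lie in $K_v$. Your argument misses this pivot entirely. The same remark applies to (iv)\,$\Rightarrow$\,(iii): your claim that ``the whole filled Julia set of $\sigma f$ is then $K_v$-rational'' is unsupported. What actually works is that for a polynomial the unique non-repelling cycle is $\{\infty\}$, so every finite preperiodic point eventually lands on a repelling cycle and thus lies in the completely invariant set $J_v(\sigma f)\subseteq\mathbb{P}^1(K_v)$.
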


We conjecture that $K^{tv}$ having property $(B)$ relative to $\widehat{h}_f$ also implies {\rm (i)} in Theorem \ref{totv}.
In the archimedean setting this has been proved in \cite{Po13} using a classification result for rational functions with Julia set lying on a circle on the Riemann sphere.

Theorem \ref{totv} is a refinement and generalization of the result of Fili and Miner \cite{FM12}. There they prove, using potential theory on the Berkovich line, that {\rm (i)} implies property $(SB)$ of $K^{tv}$ relative to $\widehat{h}_f$, and that the converse is true if $f$ is a polynomial. Our theorem shows that the converse is also true for rational maps, as property $(SB)$ of $K^{tv}$ relative to $\widehat{h}_f$ implies finiteness of $\PrePer(f)\cap \mathbb{P}^1 (K^{tv})$.

In Section \ref{background} we provide some background on non-archimedian dynamical systems.
The proof of Theorem \ref{totv} is presented in Section \ref{proof}. The main ingredient for a proof of the implication ${\rm (i)} \Rightarrow {\rm (ii)}$ is an equidistribution theorem for points of small canonical height, independently achieved by Baker and Rumely \cite{BR06}, Favre and Rivera-Letelier \cite{FRL04}, Chambert-Loir \cite{CL06} and widely generalized by Yuan \cite{Yu08}. The converse implication follows mainly from deep results of Rivera-Letelier \cite{RiL03} on the structure of the Berkovich Julia set of a rational map.

As an application of small heights on totally $p$-adic fields, in Section \ref{Narkiewicz} we investigate an arithmetic field property defined by Narkiewicz, called property $(P)$. A field $F$ is said to have \emph{property $(P)$} if the existence of an infinite set $X\subseteq F$ and a polynomial $f\in F[x]$ such that $f(X)=X$ implies the linearity of $f$.
In 1963 Narkiewicz conjectured that $\mathbb{Q}^{(d)}$, the compositum of all number fields of degree at most $d$, has property $(P)$ for all $d\in \mathbb{N}$. Using a theorem of Bombieri and Zannier, Dvornicich and Zannier gave a positive answer in the case $d=2$ (see \cite{DZ07} or \cite{DZ08}).
We prove in Section \ref{Narkiewicz} that Narkiewicz's conjecture is true in general. 

To connect this conjecture to Theorem \ref{totv}, we sketch how the conjecture follows from our theorem: For every prime $p$ there is a number field $K$ and a valuation $v\mid p$ on $K$ such that the field $\mathbb{Q}^{(d)}$ is contained in $K^{tv}$.
We use this and Theorem \ref{totv} to show that for every polynomial $f \in \overline{\mathbb{Q}}[x]$ of degree at least two, there is a number field $K$ and a non-archimedean valuation $v$ on $K$ such that $\mathbb{Q}^{(d)} \subseteq K^{tv}$ has property $(SB)$ relative to $\widehat{h}_f$.
In particular, this shows that in $\mathbb{Q}^{(d)}$ are neither infinitely many closed finite $f$-orbits nor an infinite $f$-backward orbit of any point. Note that any $f$-backward orbit of a non-preperiodic point leads by \eqref{canonical height} to a sequence of points with canonical height $\widehat{h}_f$ tending to zero. As this is true for all polynomials $f$ of degree at least two, we find that $\mathbb{Q}^{(d)}$ has property $(P)$.

These arguments are valid in a more general setting. We refer to Section \ref{Narkiewicz} for details.

Narkiewicz asked further if any subfield of $\overline{\mathbb{Q}}$ with property $(P)$ is contained in some field $\mathbb{Q}^{(d)}$. This, however, is not true as was shown ineffectively by Kubota and Liardet \cite{KL76}. Effective constructions of counterexamples were presented in \cite{DZ07} and \cite{Wi11}. Our argument leads to another class of counterexamples, which are also presented in Section \ref{Narkiewicz}.


\section{Background on non-archimedean dynamics}\label{background}

In this section we will present the main definitions from non-archimedean dynamics needed in the rest of this paper. We fix a number field $K$ with non-archimedean valuation $v$, and a rational map $f\in \mathbb{C}_v(x)$ of degree at least two. Throughout the paper we equip $\mathbb{P}^1(\mathbb{C}_v)=\mathbb{C}_v \cup \{\infty\}$ with the topology induced by the $v$-adic chordal metric. Recall that for any $x \in\mathbb{C}_v$, the $v$-adic chordal metric on $\mathbb{P}^1(\mathbb{C}_v)$ is given by
\[
\rho_v ( x, y )=\begin{dcases} \frac{\vert x - y \vert_v}{\max\{\vert x \vert_v ,1\}\max\{\vert y \vert_v ,1\}} & \text{ if } y\neq \infty \\
\frac{1}{\max\{\vert x \vert_v ,1\}} & \text{ if } y = \infty
\end{dcases}.
\]

\begin{Definition}
The \emph{classical $v$-adic Fatou set of $f$}, denoted by $F_v(f)$, is the maximal open subset of $\mathbb{P}^1(\mathbb{C}_v)$ where the iterates of $f$ are equicontinuous at every point of $F_v(f)$. The \emph{classical $v$-adic Julia set of $f$} is defined to be $J_v(f)= \mathbb{P}^1 ( \mathbb{C}_v)\setminus F_v(f)$.
\end{Definition}

Some of the well known properties of the complex Julia set are also preserved by $J_v(f)$. In particular,
\begin{equation}\label{f-invariant}
f(J_v(f))=f^{-1}(J_v(f))=J_v(f).
\end{equation}
We say that $J_v(f)$ is \emph{completely $f$-invariant}. We denote by $R_v =\{\alpha \in \mathbb{C}_v \vert \vert \alpha \vert_v \leq 1\}$ the valuation ring of $\mathbb{C}_v$.

\begin{Definition}
We choose polynomials $g,h \in R_v [x]$ such that:
\begin{itemize}
\item $f=g/h$,
\item $g$ and $h$ have no common zero,
\item at least one coefficient of $g$ or $h$ lies in $R_v^*$. 
\end{itemize}
We say that $f$ has \emph{good reduction} if the resultant $\Res(f,g)$ is in $R_v^*$. A rational map $f\in\overline{\mathbb{Q}}(x)$ has good reduction at $v$ if it has good reduction regarded as a map defined over $\mathbb{C}_v$. Consequently, $f$ has \emph{bad reduction} if it does not have good reduction.
\end{Definition}

Note that this definition does not depend on the particular choice of polynomials $g$ and $h$ satisfying the above criteria. The first statement in the following lemma is obvious from the definition of good reduction. The second statement is also well known and can be found in \cite[Theorem 2.17]{Si07}.

\begin{Lemma}\label{good reduction}
There are only finitely many rational primes $p$ such that $f$ has bad reduction at some place $v\mid p$. If $v$ is a non-archimedean place where $f$ has good reduction, then $J_v(f)$ is empty.
\end{Lemma}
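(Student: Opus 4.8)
The plan is to prove Lemma~\ref{good reduction} in two parts, corresponding to its two assertions.

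\textbf{Finiteness of primes of bad reduction.}
First I would fix a rational map $f \in \overline{\mathbb{Q}}(x)$ and choose a number field $L$ containing all coefficients of $f$; enlarging $L$ if necessary, I may write $f = g/h$ with $g, h \in \mathcal{O}_L[x]$ coprime in $L[x]$ and with at least one coefficient a unit at each place we care about (equivalently, by clearing denominators and dividing by the content, the coefficients of $g$ and $h$ generate the unit ideal at almost all places). The resultant $\operatorname{Res}(g,h)$ is then a fixed nonzero element of $\mathcal{O}_L$. For a non-archimedean place $v$ of $L$, the chosen model $g/h$ satisfies the three bullet conditions in the definition of good reduction at $v$ precisely when $v$ is not among the finitely many places dividing the content ideal, and at all such $v$ the map $f$ has good reduction exactly when $v(\operatorname{Res}(g,h)) = 0$. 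Since $\operatorname{Res}(g,h) \neq 0$, it has only finitely many prime divisors in $\mathcal{O}_L$, so only finitely many places $v$ of $L$ — hence only finitely many rational primes $p$ lying under them — are places of bad reduction. Passing back to $\overline{\mathbb{Q}}$ (or to an arbitrary number field $K$ containing the coefficients) only changes things by the finitely many primes ramifying in $L/\mathbb{Q}$, so the finiteness statement is unaffected.

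\textbf{Good reduction implies empty Julia set.}
For the second assertion I would invoke the standard fact that if $f$ has good reduction at $v$, then $f$ acts as a non-expanding map on $\mathbb{P}^1(\mathbb{C}_v)$ with respect to the chordal metric $\rho_v$: for all $x, y \in \mathbb{P}^1(\mathbb{C}_v)$ one has $\rho_v(f(x), f(y)) \leq \rho_v(x,y)$ (this is \cite[Theorem 2.17]{Si07}, which I am allowed to assume). Consequently the family of iterates $\{f^n\}_{n \geq 1}$ is uniformly equicontinuous on all of $\mathbb{P}^1(\mathbb{C}_v)$ — indeed $\rho_v(f^n(x), f^n(y)) \leq \rho_v(x,y)$ for every $n$ — so every point lies in the Fatou set $F_v(f)$. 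Hence $J_v(f) = \mathbb{P}^1(\mathbb{C}_v) \setminus F_v(f) = \emptyset$.

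\textbf{Main obstacle.} There is no deep obstacle here: both parts are essentially bookkeeping on top of \cite[Theorem 2.17]{Si07}. The only point requiring a little care is making the model choice uniform — ensuring that a single pair $(g,h) \in \mathcal{O}_L[x]^2$ serves as an admissible reduction model at all but finitely many places simultaneously, so that ``bad reduction'' is governed by the single polynomial relation $v(\operatorname{Res}(g,h)) > 0$ together with the finitely many places in the content ideal. Once that is set up, both claims follow immediately, which is presumably why the lemma is stated with a pointer to the literature rather than a full argument.
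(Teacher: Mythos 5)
Your proof is correct and follows essentially the same route as the paper, which simply notes the first assertion is immediate from the definition (a fixed nonzero resultant $\operatorname{Res}(g,h)$ has only finitely many prime divisors) and cites \cite[Theorem 2.17]{Si07} for the second (good reduction makes $f$ non-expanding in the chordal metric, forcing equicontinuity of the iterates everywhere). You have merely spelled out the bookkeeping that the paper leaves implicit, and the small inference from non-expansion to $J_v(f)=\emptyset$ that the citation subsumes.
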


Let $f^{(n)}$ be the $n$-th iterate of $f$, and for any rational map $g$ denote by $g'$ the formal derivative of $g$. A periodic point $\alpha \in \Per(f)\setminus\{\infty\}$ of exact period $n$ is called \emph{repelling} if $\vert (f^{(n)}) ' (\alpha) \vert_v > 1$. A periodic point of $f$ is in $J_v(f)$ if and only if it is repelling \cite[Proposition 5.20]{Si07}.
For more details on classical non-archimedean dynamical systems in dimension one we refer the reader to \cite[Chapters 2 and 5]{Si07}. 

The great disadvantage in studying dynamical systems on $\mathbb{P}^1 (\mathbb{C}_v)$ is that this space is totally disconnected. It turns out that it is more convenient to work in the Berkovich projective line $\mathbb{P}^{1,\mathcal{B}}_{v}$. This is a path-connected Hausdorff-space which contains $\mathbb{P}^1 (\mathbb{C}_v)$ as a dense subspace. We call the topology on $\mathbb{P}^{1,\mathcal{B}}_{v}$ the \emph{Berkovich topology}. Moreover, for any subset $S \subseteq \mathbb{P}^{1}(\mathbb{C}_v)$ we call the closure of $S$, regarded as a subset of $\mathbb{P}^{1,\mathcal{B}}_{v}$, the \emph{Berkovich closure of $S$}. For the general theory of Berkovich spaces we refer the reader to \cite{Ber} and for detailed information on the theory of dynamical systems on the Berkovich projective line we refer to \cite{BR}. 

Every rational function $f \in \overline{\mathbb{Q}}(x)$ of degree at least two leads to a canonical $f$-invariant probability measure $\mu_{f,v}$ on $\mathbb{P}^{1,\mathcal{B}}_{v}$. For a construction of this measure we refer the reader to \cite[\S 10.1]{BR}. The \emph{Berkovich Julia set $J_{v}^{\mathcal{B}}(f)$} of $f$ is defined as the support of $\mu_{f,v}$. Hence, in contrast to the classical $v$-adic Julia set $J_v (f)$, the set $J_{v}^{\mathcal{B}}(f)$ is never empty.
These two notions of non-archimedean Julia sets have the important property 
\begin{equation}\label{Julia}
J_{v}^{\mathcal{B}}(f) \cap \mathbb{P}^1 (\mathbb{C}_v) = J_v (f),
\end{equation} 
which is shown in \cite[Theorem 10.67]{BR}. We will recall a useful characterization of $J_{v}^{\mathcal{B}}(f)$.

\begin{Theorem}\label{BerkJuliaProp}
There is a unique extension of $f$ to a continuous self-map on $\mathbb{P}^{1,\mathcal{B}}_{v}$, which we again denote by $f$. The Berkovich Julia set $J_{v}^{\mathcal{B}}(f)$ is the smallest non-empty subset of $\mathbb{P}^{1,\mathcal{B}}_{v}$ such that
\begin{itemize}
\item[(i)] $J_{v}^{\mathcal{B}}(f)$ is completely $f$-invariant,
\item[(ii)] $J_{v}^{\mathcal{B}}(f)$ is compact,
\item[(iii)] $J_{v}^{\mathcal{B}}(f)$ contains no exceptional point, that is, no point with finite forward and finite backward orbits.
\end{itemize}
\end{Theorem}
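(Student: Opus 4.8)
The plan is to take $J_{v}^{\mathcal{B}}(f):=\operatorname{supp}\mu_{f,v}$, which is the definition already in force, and to verify in turn the continuous extension of $f$, then that this set is non-empty and satisfies (i)--(iii), and finally that it is minimal with these properties; the minimality part is the only one that requires a genuine theorem rather than soft arguments, so that is where I expect the main obstacle.

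First I would settle the extension of $f$. Writing $f=[F_0:F_1]$ with $F_0,F_1\in\mathbb{C}_v[X,Y]$ homogeneous of degree $d=\deg f$ and without a common factor, one lets $f$ act on $\mathbb{P}^{1,\mathcal{B}}_{v}$ by functoriality of Berkovich analytification --- concretely, via the action on multiplicative seminorms by pullback along $(F_0,F_1)$; a routine check, for which I would simply refer to \cite{BR}, shows this is continuous for the Berkovich topology and restricts to the given $f$ on $\mathbb{P}^1(\mathbb{C}_v)$. Uniqueness is then automatic: $\mathbb{P}^1(\mathbb{C}_v)$ is dense in the Hausdorff space $\mathbb{P}^{1,\mathcal{B}}_{v}$, so any two continuous extensions agreeing on it coincide.

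Next, for properties (i)--(iii). The set $J_{v}^{\mathcal{B}}(f)$ is non-empty, being the support of a probability measure, and it is compact since the support of a Radon measure is closed and $\mathbb{P}^{1,\mathcal{B}}_{v}$ is compact; this is (ii). For (i) I would use that the canonical measure satisfies $f^{*}\mu_{f,v}=d\cdot\mu_{f,v}$, whence also $f_{*}\mu_{f,v}=\mu_{f,v}$ (see \cite[\S 10.1]{BR}), together with the elementary fact that for the finite, surjective, open self-map $f$ of $\mathbb{P}^{1,\mathcal{B}}_{v}$ one has $\operatorname{supp}(f_{*}\nu)=f(\operatorname{supp}\nu)$ and $\operatorname{supp}(f^{*}\nu)=f^{-1}(\operatorname{supp}\nu)$ for every Radon measure $\nu$ (the inclusions ``$\subseteq$'' use continuity and the compactness of supports, the inclusions ``$\supseteq$'' use openness). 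Taking $\nu=\mu_{f,v}$ yields $f(J_{v}^{\mathcal{B}}(f))=f^{-1}(J_{v}^{\mathcal{B}}(f))=J_{v}^{\mathcal{B}}(f)$. For (iii) I would invoke that the exceptional locus $\mathcal{E}_f$ is finite --- at most two classical points --- and that $\mu_{f,v}$ puts no mass on any of them, which is part of the construction of $\mu_{f,v}$ in \cite[Ch.~10]{BR}; hence $J_{v}^{\mathcal{B}}(f)$ is disjoint from $\mathcal{E}_f$.

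The substantial point is minimality. Let $Y\subseteq\mathbb{P}^{1,\mathcal{B}}_{v}$ be non-empty, compact, completely $f$-invariant, and free of exceptional points; I must show $J_{v}^{\mathcal{B}}(f)\subseteq Y$. Choose any $y\in Y$. Since $y$ is not exceptional, in particular it is not one of the (at most two) classical exceptional points, so the equidistribution theorem for iterated preimages of a non-exceptional point --- due to Baker--Rumely and to Favre--Rivera-Letelier, see \cite{BR} --- applies and gives $\frac{1}{d^{\,n}}(f^{n})^{*}\delta_{y}\to\mu_{f,v}$ weakly as $n\to\infty$. Each measure in this sequence is supported on $f^{-n}(y)$, which by complete invariance lies in the closed set $Y$; hence the weak limit $\mu_{f,v}$ is supported on $Y$ as well, i.e.\ $J_{v}^{\mathcal{B}}(f)=\operatorname{supp}\mu_{f,v}\subseteq Y$. (In the degenerate case where $Y$, hence $J_{v}^{\mathcal{B}}(f)$, is a single point of potentially good reduction, the sequence is eventually constant and the conclusion is immediate.) A variant would deduce minimality from the ``blowing-up'' property $\bigcup_{n\ge 0}f^{n}(U)\supseteq\mathbb{P}^{1,\mathcal{B}}_{v}\setminus\mathcal{E}_f$ valid for every neighbourhood $U$ of a Julia point, but that property is itself established via equidistribution in \cite{BR}, so the essential input is the same.
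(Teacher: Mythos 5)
The paper itself offers no proof: it cites \cite[\S 2.3, Corollary 10.57]{BR} and moves on. Your proposal is therefore not an alternative route but a reconstruction of the argument behind that citation; the scheme you describe --- analytification for the extension, the relations $f_*\mu_{f,v}=\mu_{f,v}$ and $f^*\mu_{f,v}=\deg(f)\,\mu_{f,v}$ together with support identities for complete invariance, and equidistribution of preimages of a non-exceptional point for minimality --- is exactly Baker--Rumely's. So there is no methodological divergence to compare; the substantive question is whether your reconstruction is sound.

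The extension, (i), (ii), and the minimality argument are fine. The justification of (iii), however, contains a genuine gap. You deduce that $J_v^{\mathcal{B}}(f)$ is disjoint from $\mathcal{E}_f$ from the fact that $\mu_{f,v}$ assigns zero mass to each exceptional point; but zero mass at a point does not remove it from the support --- Lebesgue measure on $[0,1]$ gives every point mass zero and has support $[0,1]$. The correct argument is dynamical rather than measure-theoretic: the at most two classical exceptional points are totally ramified (super)attracting fixed points or a $2$-cycle, hence lie in the Berkovich Fatou set, while $\operatorname{supp}\mu_{f,v}$ is contained in its complement; alternatively, an exceptional point in the Julia set would, by complete invariance, force $J_v^{\mathcal{B}}(f)$ to be a finite grand orbit, impossible when $J_v^{\mathcal{B}}(f)$ is perfect. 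You have also silently discarded the one possible type-II exceptional point, which exists exactly when $f$ has potential good reduction; in that case $J_v^{\mathcal{B}}(f)$ \emph{is} this exceptional point, so condition (iii) as stated fails outright. You acknowledge this case parenthetically in the minimality step, but it is a defect of the theorem's formulation, not a degenerate case that resolves itself, and it deserves an explicit caveat. (In the paper this causes no harm, because the theorem is only invoked under the hypothesis $J_v(f)\neq\emptyset$, which excludes potential good reduction.)
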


\begin{proof} See \cite[\S 2.3, and Corollary 10.57]{BR}. \end{proof}

In complex dynamics, the Julia set is the closure of the repelling periodic points. In the classic non-archimedean setting this is an open conjecture (see \cite{Hs00}). The next proposition will be useful in the proof of Theorem \ref{totv}.

\begin{Proposition}\label{Juliaequal}
If the classical $v$-adic Julia set of $f$ is a non-empty subset of $\mathbb{P}^1(K_v)$, then
\[
J_v(f)=J_{v}^{\mathcal{B}}(f)
\]
and $J_v(f)$ is the closure of the repelling periodic points of $f$.
\end{Proposition}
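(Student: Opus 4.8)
The plan is to exploit the characterization of $J_v^{\mathcal{B}}(f)$ given in Theorem \ref{BerkJuliaProp} together with the identity \eqref{Julia}. Since $J_v(f)$ is assumed non-empty, \eqref{Julia} already tells us that $J_v(f) = J_v^{\mathcal{B}}(f) \cap \mathbb{P}^1(\mathbb{C}_v)$, so $J_v(f) \subseteq J_v^{\mathcal{B}}(f)$. To get the reverse inclusion I would show that the Berkovich closure of $J_v(f)$, call it $\overline{J_v(f)}$, already satisfies properties (i), (ii) and (iii) of Theorem \ref{BerkJuliaProp}; by minimality of $J_v^{\mathcal{B}}(f)$ this forces $J_v^{\mathcal{B}}(f) \subseteq \overline{J_v(f)}$, and then I must argue that no new points are added in the closure, i.e. $\overline{J_v(f)} = J_v(f)$. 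The last point is where the hypothesis $J_v(f)\subseteq\mathbb{P}^1(K_v)$ really enters: a bounded (with respect to the chordal metric) closed subset of $\mathbb{P}^1(K_v)$ — more precisely, a closed subset of $\mathbb{P}^1(K_v)$, which is locally compact since $K_v$ is a local field — is already compact inside $\mathbb{P}^{1,\mathcal{B}}_v$, so it equals its own Berkovich closure and contains no points of type II, III or IV. This is the structural input from $\mathbb{P}^1(K_v)$ being closed in $\mathbb{P}^{1,\mathcal{B}}_v$ (a type I point limit of type I points from a local field stays of type I).

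Concretely, I would proceed as follows. First, compactness of $J_v(f)$ in the Berkovich topology: $J_v(f)$ is closed in $\mathbb{P}^1(\mathbb{C}_v)$ by definition (complement of the open Fatou set), hence closed in $\mathbb{P}^1(K_v)$; since $K_v$ is locally compact and $\mathbb{P}^1(K_v)$ is compact, $J_v(f)$ is compact in the $v$-adic topology, and the inclusion $\mathbb{P}^1(K_v)\hookrightarrow \mathbb{P}^{1,\mathcal{B}}_v$ is a topological embedding, so $J_v(f)$ is compact — hence closed — in $\mathbb{P}^{1,\mathcal{B}}_v$. This gives simultaneously property (ii) and the equality $\overline{J_v(f)} = J_v(f)$. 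Second, complete $f$-invariance: the Berkovich extension of $f$ restricts on $\mathbb{P}^1(\mathbb{C}_v)$ to the original $f$, and \eqref{f-invariant} gives $f(J_v(f)) = f^{-1}(J_v(f)) = J_v(f)$ already at the level of classical points, so property (i) holds for $J_v(f)$ itself. Third, property (iii): an exceptional point has finite forward and backward orbit, but $J_v(f)$ is non-empty and completely invariant, so if it contained an exceptional point it would be a finite set, contradicting the fact that a non-empty classical Julia set is infinite (it contains infinitely many repelling periodic points, or one can invoke that $J_v^{\mathcal{B}}(f)$ — which contains $J_v(f)$ — is perfect, hence so is any nonempty completely invariant compact piece it forces; cleanest is to note a finite completely invariant set consists of exceptional points, which $J_v^{\mathcal{B}}(f)$ forbids). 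Hence by Theorem \ref{BerkJuliaProp} the minimality of $J_v^{\mathcal{B}}(f)$ yields $J_v^{\mathcal{B}}(f)\subseteq J_v(f)$, and combined with the first paragraph $J_v(f) = J_v^{\mathcal{B}}(f)$.

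For the second assertion — that $J_v(f)$ is the closure of the repelling periodic points — I would use the known description of $J_v^{\mathcal{B}}(f)$ as the Berkovich closure of the repelling periodic points in $\mathbb{P}^{1,\mathcal{B}}_v$ (this is a theorem in \cite{BR}, e.g. the Berkovich analogue of the classical statement, available because repelling cycles equidistribute to $\mu_{f,v}$). Since all these repelling periodic points lie in $J_v(f)\subseteq\mathbb{P}^1(K_v)$, and $J_v(f)$ is its own Berkovich closure by the first part, their Berkovich closure is contained in $J_v(f)$; but it equals $J_v^{\mathcal{B}}(f) = J_v(f)$, so in fact the classical closure of the repelling periodic points is all of $J_v(f)$.

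The main obstacle I anticipate is the topological claim that $\mathbb{P}^1(K_v)$ (equivalently, a closed subset thereof) is closed in $\mathbb{P}^{1,\mathcal{B}}_v$ — that a Berkovich-limit of classical points coming from the local field $K_v$ cannot escape to a point of type II–IV. This is where local compactness of $K_v$ is essential (it fails badly for $\mathbb{C}_v$ in place of $K_v$), and it must be justified carefully, presumably by covering $\mathbb{P}^1(K_v)$ by finitely many closed balls, each of which is compact in $K_v$ and hence compact — so closed — in $\mathbb{P}^{1,\mathcal{B}}_v$. Everything else is a bookkeeping exercise in applying Theorem \ref{BerkJuliaProp} and \eqref{Julia}.
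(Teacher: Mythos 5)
Your proposal is correct and takes essentially the same route as the paper: both establish compactness of $J_v(f)$ inside $\mathbb{P}^{1,\mathcal{B}}_v$ from the compactness of $\mathbb{P}^1(K_v)$ (the paper's Lemma \ref{Berklemma}), verify the three conditions of Theorem \ref{BerkJuliaProp}, invoke minimality to get $J_v^{\mathcal{B}}(f)\subseteq J_v(f)$, and finish with Rivera-Letelier's \cite[Theorem 10.88]{BR}. One simplification you could make in the exceptional-point step: \eqref{Julia} already gives $J_v(f)\subseteq J_v^{\mathcal{B}}(f)$, and the latter contains no exceptional points by Theorem \ref{BerkJuliaProp}(iii), so neither does $J_v(f)$ --- there is no need for the detour through finiteness of orbits or perfectness, and indeed the first version of that detour (``if it contained an exceptional point it would be a finite set'') does not quite follow as stated.
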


\begin{proof}
Under our assumption, \eqref{Julia} tells us that $J_{v}^{\mathcal{B}}(f) \cap \mathbb{P}^1 (K_v) = J_v (f)$. Both sets on the left hand side are compact, and hence $J_v(f)$ is compact in the Hausdorff space $\mathbb{P}^{1,\mathcal{B}}_{v}$.

By assumption, $J_v(f)$ is not empty and by Theorem \ref{BerkJuliaProp}, it contains no exceptional point of $f$.
Since $J_v(f)$ is completely $f$-invariant in the classical sense, it is also completely $f$-invariant for the continuous extension of $f$ to a self-map on $\mathbb{P}^{1,\mathcal{B}}_{v}$ (see \cite[Proposition 2.15]{BR}).

We have just proved that $J_v(f)$ is a non-empty, compact and completely $f$-invariant subset of $\mathbb{P}^{1,\mathcal{B}}_{v}$ containing no exceptional point. By Theorem \ref{BerkJuliaProp}, we get $J_{v}^{\mathcal{B}}(f) \subseteq J_v (f)$, and hence the first claim of the proposition follows.

For the Berkovich Julia set $J_{v}^{\mathcal{B}}(f)$, Rivera-Letelier \cite[Theorem 10.88]{BR} proved that it is the Berkovich closure of the set of repelling periodic points of $f$ in $\mathbb{P}^{1,\mathcal{B}}_{v}$. But since $J_{v}^{\mathcal{B}}(f)$ equals $J_v (f)$, this is also true in the classical non-archimedean setting. This is precisely the second claim of the proposition.
\end{proof}


\section{Proof of Theorem 1.1} \label{proof}

\subsection{Preliminary lemmas} 
Let again $K$ be a number field with a non-archimedean valuation $v$.

\begin{Lemma}\label{Berklemma}
 The Berkovich closure of $\mathbb{P}^{1}(K)$ is $\mathbb{P}^1 (K_v)$. 
\end{Lemma}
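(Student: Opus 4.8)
The plan is to prove the two inclusions $\overline{\mathbb{P}^1(K)} \subseteq \mathbb{P}^1(K_v)$ and $\mathbb{P}^1(K_v) \subseteq \overline{\mathbb{P}^1(K)}$ separately, where the closure is taken in the Berkovich topology on $\mathbb{P}^{1,\mathcal{B}}_v$. For the first inclusion I would use that $\mathbb{P}^1(K_v)$ is closed in $\mathbb{P}^{1,\mathcal{B}}_v$: since $K_v$ is complete and locally compact, $\mathbb{P}^1(K_v)$ is a compact subset of the Hausdorff space $\mathbb{P}^{1,\mathcal{B}}_v$, hence closed; as it contains $\mathbb{P}^1(K)$ it contains the Berkovich closure of $\mathbb{P}^1(K)$. (Alternatively, the Berkovich topology on $\mathbb{P}^{1,\mathcal{B}}_v$ restricts on $\mathbb{P}^1(\mathbb{C}_v)$ to the $v$-adic chordal topology, so one can simply argue: a Berkovich-limit of points of $\mathbb{P}^1(K)$ that happens to lie in $\mathbb{P}^1(\mathbb{C}_v)$ is a chordal limit, and $K$ is complete, so the limit lies in $\mathbb{P}^1(K_v)$ — but one must still know the limit lies in $\mathbb{P}^1(\mathbb{C}_v)$, which is exactly the closedness statement; so the compactness argument is cleaner.)

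For the reverse inclusion, the key point is that $K$ is dense in $K_v$ in the $v$-adic (chordal) topology, hence $\mathbb{P}^1(K)$ is chordally dense in $\mathbb{P}^1(K_v)$. Since $\mathbb{P}^1(K_v) \subseteq \mathbb{P}^1(\mathbb{C}_v)$ and the Berkovich topology restricted to $\mathbb{P}^1(\mathbb{C}_v)$ coincides with the chordal topology, chordal density of $\mathbb{P}^1(K)$ in $\mathbb{P}^1(K_v)$ gives that every point of $\mathbb{P}^1(K_v)$ is a Berkovich-limit of points of $\mathbb{P}^1(K)$, i.e. $\mathbb{P}^1(K_v) \subseteq \overline{\mathbb{P}^1(K)}$. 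Combining the two inclusions yields $\overline{\mathbb{P}^1(K)} = \mathbb{P}^1(K_v)$.

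The only genuinely substantive input is the fact that $\mathbb{P}^1(K_v)$ is closed (equivalently compact) in $\mathbb{P}^{1,\mathcal{B}}_v$; this is where the local compactness of $K_v$ (as a finite extension of $\mathbb{Q}_p$) is used, and it is what distinguishes $K_v$ from $\mathbb{C}_v$, whose image in $\mathbb{P}^{1,\mathcal{B}}_v$ is dense and not closed. Everything else — density of $K$ in $K_v$, and the compatibility of the Berkovich topology with the chordal topology on classical points — is standard and recorded in \cite{BR}. I expect the main obstacle to be purely expository: making sure the compactness/closedness claim is cited precisely (e.g. via the identification of $\mathbb{P}^1(K_v)$ with a closed subset of $\mathbb{P}^{1,\mathcal{B}}_v$ coming from $K_v \hookrightarrow \mathbb{C}_v$) rather than re-proved.
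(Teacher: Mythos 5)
Your proposal is correct and follows essentially the same route as the paper: both rely on the compactness (hence closedness) of $\mathbb{P}^1(K_v)$ in $\mathbb{P}^{1,\mathcal{B}}_v$ together with the fact that the Berkovich topology restricts to the chordal topology on classical points, so that density of $K$ in $K_v$ gives the reverse inclusion. The paper's version is merely more compressed, leaving the two-inclusion structure implicit.
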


\begin{proof} The space $\mathbb{P}^{1}(K_v)$ is a compact (and hence closed) subspace of $\mathbb{P}^{1,\mathcal{B}}_{v}$. The restriction of the Berkovich topology to $\mathbb{P}^{1}(K_v)$ is just the topology induced by the chordal metric $\rho_v$. Therefore, the closure of $\mathbb{P}^{1}(K)$ in the Berkovich topology is $\mathbb{P}^{1}(K_v)$ as claimed. \end{proof}

\begin{Lemma}\label{height}
 For any $f\in \overline{\mathbb{Q}}(x)$ of degree at least two and any $\sigma \in \Gal(\overline{\mathbb{Q}}/\mathbb{Q})$, we have $\widehat{h}_{f} = \widehat{h}_{\sigma(f)}\circ \sigma$.
\end{Lemma}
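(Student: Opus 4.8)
The plan is to use the characterization \eqref{canonical height} of the canonical height: $\widehat{h}_f$ is the unique function $\mathbb{P}^1(\overline{\mathbb{Q}})\to\mathbb{R}$ satisfying $\widehat{h}_f\circ f=\deg(f)\widehat{h}_f$ and $\widehat{h}_f=h+O(1)$. So I would set $H:=\widehat{h}_{\sigma(f)}\circ\sigma$ and verify that $H$ satisfies these two defining properties for the map $f$; uniqueness then forces $H=\widehat{h}_f$.

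First I would record the elementary but crucial input that the Weil height is Galois-invariant: $h\circ\sigma=h$ for every $\sigma\in\Gal(\overline{\mathbb{Q}}/\mathbb{Q})$, since $\sigma$ permutes the conjugates of an algebraic number and the height is defined via a product over all places of a number field containing all those conjugates. Next, the functional equation: because $\sigma$ acts coefficientwise on $\overline{\mathbb{Q}}(x)$, one has the intertwining identity $\sigma\circ f=\sigma(f)\circ\sigma$ as maps on $\mathbb{P}^1(\overline{\mathbb{Q}})$ (applying $\sigma$ to the coordinates of $f(\alpha)$ gives $\sigma(f)$ evaluated at $\sigma(\alpha)$). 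Hence
\begin{align*}
H\circ f &= \widehat{h}_{\sigma(f)}\circ\sigma\circ f = \widehat{h}_{\sigma(f)}\circ\sigma(f)\circ\sigma = \deg(\sigma(f))\,\widehat{h}_{\sigma(f)}\circ\sigma = \deg(f)\, H,
\end{align*}
using $\deg(\sigma(f))=\deg(f)$. For the second property, $H = \widehat{h}_{\sigma(f)}\circ\sigma = (h+O(1))\circ\sigma = h\circ\sigma + O(1) = h + O(1)$ by the Galois-invariance of $h$. By the uniqueness in \eqref{canonical height}, $H=\widehat{h}_f$, which is the claim.

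There is no serious obstacle here; the only point requiring a word of care is the intertwining identity $\sigma\circ f=\sigma(f)\circ\sigma$, which should be checked on a representative $f=g_1/g_2$ with $g_i\in\overline{\mathbb{Q}}[x]$ (and separately at points where $\sigma$ must be read projectively, e.g. poles of $f$ and the point $\infty$), but this is a routine coordinate computation. Everything else is a direct appeal to the characterizing properties of the canonical height and the standard fact that $h$ is invariant under the absolute Galois group.
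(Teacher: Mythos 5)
Your proof is correct and is essentially the same argument the paper has in mind: the paper simply asserts that the lemma ``follows immediately from the defining properties \eqref{canonical height} of the canonical heights,'' and your write-up is the natural fleshing-out of that assertion (verifying that $\widehat{h}_{\sigma(f)}\circ\sigma$ satisfies the two characterizing properties for $f$ and invoking uniqueness). No discrepancy.
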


\begin{proof} This follows immediately from the defining properties \eqref{canonical height} of the canonical heights. \end{proof}

One of the main ingredients in the proof of Theorem \ref{totv} is the following result, which itself is an application of the celebrated equidistribution theorem of Yuan (see \cite[Theorem 3.7]{Yu08}).

\begin{Theorem}\label{v-adic Bogomolov}
Let $f \in \overline{\mathbb{Q}}(x)$ of degree at least two be such that $J^{\mathcal{B}}_{v} (f)$ is not contained in $\mathbb{P}^{1}(K_v)$. If $L/K$ is a Galois extension lying in $K_v$, then $L$ has property $(SB)$ relative to $\widehat{h}_f$.
\end{Theorem}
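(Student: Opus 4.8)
The plan is to argue by contradiction using the equidistribution theorem of Yuan \cite[Theorem 3.7]{Yu08}. Assume $L$ does not have property $(SB)$ relative to $\widehat{h}_f$. Then there is an infinite sequence of pairwise distinct points $\alpha_1,\alpha_2,\dots\in\mathbb{P}^1(L)$ with $\widehat{h}_f(\alpha_n)\to 0$. Indeed, if property $(B)$ fails for $L$ one chooses $\alpha_n\in\mathbb{P}^1(L)\setminus\PrePer(f)$ with $0<\widehat{h}_f(\alpha_n)<1/n$; these heights are positive and tend to $0$, so they attain infinitely many values and, after passing to a subsequence, the $\alpha_n$ are distinct. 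If instead $(B)$ holds but $\PrePer(f)\cap\mathbb{P}^1(L)$ is infinite, one takes the $\alpha_n$ to be distinct preperiodic points, so that $\widehat{h}_f(\alpha_n)=0$.

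Since the coefficients of $f$ lie in a number field, I would fix a number field $F$ with $K\subseteq F\subseteq\overline{\mathbb{Q}}$ and $f\in F(x)$. Fix an embedding $\iota\colon L\hookrightarrow K_v$ witnessing the hypothesis that $L/K$ lies in $K_v$, extend $\iota$ to an embedding $\overline{\mathbb{Q}}\hookrightarrow\mathbb{C}_v$ (possible since $\mathbb{C}_v$ is algebraically closed), and let $w$ be the place of $F$ induced by the restriction of this embedding to $F$. Then $w\mid v$, and since $F_w/\mathbb{Q}_p$ is finite we have $\mathbb{C}_w=\mathbb{C}_v$; hence the Berkovich line, the canonical measure and the Berkovich Julia set for $w$ coincide with those for $v$, so in particular $\operatorname{supp}\mu_{f,v}=J^{\mathcal{B}}_v(f)\not\subseteq\mathbb{P}^1(K_v)$ by hypothesis. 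Moreover, for every $\tau\in\Gal(\overline{\mathbb{Q}}/F)\subseteq\Gal(\overline{\mathbb{Q}}/K)$ one has $\tau(\alpha_n)\in L$, because $L/K$ is Galois; hence our embedding sends every $\Gal(\overline{\mathbb{Q}}/F)$-conjugate of $\alpha_n$ into $K_v$.

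The sequence $(\alpha_n)$ consists of pairwise distinct points of $\mathbb{P}^1(\overline{\mathbb{Q}})$ with $\widehat{h}_f(\alpha_n)\to 0$ (recall $\widehat{h}_f\geq 0$, so this is the least possible value); and since each Galois orbit over $F$ is finite, the $\alpha_n$ fall into infinitely many orbits, so $(\alpha_n)$ is a generic sequence of points of small height in the sense of \cite[Theorem 3.7]{Yu08}. Applying that theorem at the place $w$, the discrete probability measures $\nu_n:=\frac{1}{\vert O_n\vert}\sum_{\beta\in O_n}\delta_\beta$, where $O_n\subseteq\mathbb{C}_v$ is the $\Gal(\overline{\mathbb{Q}}/F)$-orbit of $\alpha_n$, converge weakly on $\mathbb{P}^{1,\mathcal{B}}_{v}$ to $\mu_{f,v}$. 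By the previous paragraph each $\nu_n$ is supported on $\mathbb{P}^1(K_v)$, which is compact, hence closed in the Hausdorff space $\mathbb{P}^{1,\mathcal{B}}_{v}$ (compare the proof of Lemma \ref{Berklemma}). A weak limit of probability measures all supported on one fixed closed set is again supported on that set, so $J^{\mathcal{B}}_v(f)=\operatorname{supp}\mu_{f,v}\subseteq\mathbb{P}^1(K_v)$, contradicting the hypothesis. Therefore $L$ has property $(SB)$ relative to $\widehat{h}_f$.

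I expect the real work to lie in the careful handling of the equidistribution theorem rather than in any single hard estimate: one must descend $f$ to a number field $F\supseteq K$ so that the relevant Galois orbits are still forced into $K_v$, verify the (here automatic) genericity and smallness hypotheses on $\mathbb{P}^1$, and identify the canonical measure attached to $w$ with $\mu_{f,v}$. The concluding step, upgrading ``every $\nu_n$ is supported on the closed set $\mathbb{P}^1(K_v)$'' to ``$\mu_{f,v}$ is supported on $\mathbb{P}^1(K_v)$'', is the routine portmanteau argument.
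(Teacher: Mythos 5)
Your proof is correct and takes essentially the same route that the paper intends: the paper's proof of Theorem~\ref{v-adic Bogomolov} defers to \cite{Po13} and \cite{FM12}, both of which argue exactly as you do, by producing a generic sequence of small points in $\mathbb{P}^1(L)$, applying Yuan's equidistribution theorem \cite{Yu08} at the place above $v$, and concluding that the canonical measure (hence its support $J^{\mathcal{B}}_v(f)$) would have to live on the closed set $\mathbb{P}^1(K_v)$, contradicting the hypothesis. The only cosmetic remark is that the extension of $\iota$ to $\overline{\mathbb{Q}}\hookrightarrow\mathbb{C}_v$ should simply be taken to be the ambient inclusion $\overline{\mathbb{Q}}\subseteq\mathbb{C}_v$ already fixed in the paper, so that the place $w$ on $F$ and the measure $\mu_{f,v}$ are a~priori the ones appearing in the hypothesis rather than an independent choice.
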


\begin{proof} See \cite[Theorem 2.5]{Po13} or \cite[Theorem 1]{FM12} for a proof. Note that in both cited theorems, the assumption is that $J^{\mathcal{B}}_{v} (f)$ shall not be contained in the Berkovich closure of $\mathbb{P}^{1}(K)$, which by Lemma \ref{Berklemma} is equivalent to the present formulation. \end{proof}

\subsection{Proof of Theorem \ref{totv}}
In addition to the previous lemmas we will use the substantial work of Rivera-Letelier, Baker and Rumely on dynamical systems on $\mathbb{P}^{1,\mathcal{B}}_{v}$, summarized in \cite[Chapter 10]{BR}.

\begin{proof}[Proof of Theorem \ref{totv}] We first prove that {\rm (i)} implies {\rm (ii)} as well as the Bogomolov property of $K^{tv}$ relative to $\widehat{h}_f$. This is equivalent to the statement that {\rm (i)} implies property $(SB)$ of $K^{tv}$ relative to $\widehat{h}_f$. So let $\sigma \in \Gal(\overline{\mathbb{Q}}/K)$ be such that $J_v (\sigma(f)) \not\subset \mathbb{P}^{1}(K_v)$ or $J_v(\sigma(f))=\emptyset$. Using equation \eqref{Julia} we infer that also $J^{\mathcal{B}}_{v} (\sigma(f)) \not\subseteq \mathbb{P}^{1}(K_v)$. Hence, by Theorem \ref{v-adic Bogomolov} we know that $K^{tv}$ has property $(SB)$ relative to $\widehat{h}_{\sigma(f)}=\widehat{h}_f \circ \sigma^{-1}$ (see Lemma \ref{height}). 
The extension $K^{tv}/K$ is Galois, and hence $\sigma^{-1}(K^{tv})=K^{tv}$. This proves the claim.

The implication ${\rm (ii)} \Rightarrow {\rm (iii)}$ is trivial, since $\PrePer(f)$ is an infinite set.

Next we will prove ${\rm (iii)} \Rightarrow {\rm (i)}$ by contraposition. So we assume that $J_v(\sigma(f))$ is a non-empty subset of $\mathbb{P}^1 (K_v)$ for all $\sigma \in \Gal(\overline{\mathbb{Q}}/K)$. Let $\sigma$ be in $\Gal(\overline{\mathbb{Q}}/K)$.
By Proposition \ref{Juliaequal} we have 
\begin{equation}\label{JuliaPropRef}
J_{v}^{\mathcal{B}}(\sigma(f))= J_v (\sigma(f)),
\end{equation}
 and $J_v (\sigma(f))$ is the closure of the set of repelling periodic points of $\sigma(f)$ in $\mathbb{P}^{1}(\mathbb{C}_v)$. Since any repelling periodic point of $\sigma(f)$ in $\mathbb{P}^{1,\mathcal{B}}_{v}$ belongs to $J_{v}^{\mathcal{B}}(\sigma(f))$, equation \eqref{JuliaPropRef} tells us that there is no repelling periodic point of $\sigma(f)$ in $\mathbb{P}^{1,\mathcal{B}}_{v}\setminus \mathbb{P}^{1}(\mathbb{C}_v)$. Hence we can apply another theorem of Rivera-Letelier \cite[Proposition 10.101]{BR} to deduce that there is at most one non-repelling periodic point of $\sigma(f)$ in $\mathbb{P}^{1}(\mathbb{C}_v)$. We conclude that 
\begin{equation}\label{onepoint}
\alpha \in J_v(\sigma(f)) \subseteq \mathbb{P}^1 (K_v) \text{ for all but possibly one } \alpha\in\Per(\sigma(f)).
\end{equation}
Since $J_v(\sigma(f))$ is completely $\sigma(f)$-invariant and contains infinitely many points from $K_v$, the rational map $\sigma(f)$ is necessarily defined over $K_v$. We write $\sigma(f)=g/h$ for polynomials $g,h\in K_v [x]$. For all $n\in\mathbb{N}_0$ there are polynomials $g_n,h_n \in K_v [x]$ such that $\sigma(f)^{(n)}=g_n/h_n$. The finite periodic points of $\sigma(f)$ are precisely the roots of the polynomials
\[
H_n (x)=g_n (x) - x h_n (x) \quad, \quad n \in \mathbb{N}_0.
\]
By \eqref{onepoint}, for each $n$ at most one root of $H_n$ does not lie in $K_v$. But since $H_n$ is also defined over $K_v$, all roots of $H_n$ must lie in $K_v$. This proves 
\begin{equation}\label{sigper}
\sigma(\Per(f))=\Per(\sigma(f))\subseteq \mathbb{P}^1 (K_v)
\end{equation}
for all $\sigma\in\Gal(\overline{\mathbb{Q}}/K)$, and hence $\Per(f)\subseteq \mathbb{P}^1 (K^{tv})$, as desired.

We will use \eqref{onepoint} to prove ${\rm (iv)} \Rightarrow {\rm (i)}$. We argue by contraposition, so let $f\in \overline{\mathbb{Q}}[x]$ of degree at least two be such that $J_v(\sigma(f))$ is a non-empty subset of $\mathbb{P}^1 (K_v)$ for all $\sigma \in \Gal(\overline{\mathbb{Q}}/K)$. In this case $\infty$ is a (super)attracting fixed point of $\sigma(f)$ for all $\sigma \in \Gal(\overline{\mathbb{Q}}/K)$, and we know from \eqref{onepoint} that it is the only non-repelling periodic point for these maps. The complete $\sigma(f)$-invariance of $J_v(\sigma(f))$ implies directly that 
\begin{equation}\label{sigpreper}
\sigma(\PrePer(f))\setminus[\infty]_{\sigma(f)} = \PrePer(\sigma(f))\setminus [\infty]_{\sigma(f)} \subseteq \mathbb{P}^1 (K_v).
\end{equation}
 Here $[\infty]_{\sigma(f)}$ denotes the union of all forward and backward orbits of $\infty$ under $\sigma(f)$. But since $\sigma(f)$ is a polynomial, we have $[\infty]_{\sigma(f)}=\{\infty\}$ for all $\sigma \in \Gal(\overline{\mathbb{Q}}/K)$. Hence, by \eqref{sigpreper} we have $\PrePer(f)\subseteq \mathbb{P}^1 (K^{tv})$, as desired.

The implication ${\rm (iii)} \Rightarrow {\rm (iv)}$ is trivial.
\end{proof}

\subsection{Examples}
We present two easy examples: one where the conditions of Theorem \ref{totv} are satisfied, and the other where they are not. Moreover, we will use Theorem \ref{totv} to reprove (an ineffective version of) a theorem due to Baker and Petsche \cite{BP05}.

\begin{Example}\label{goodreduction}
 Let $K^{tv}$ be as in Theorem \ref{totv} and $f \in \overline{\mathbb{Q}}(x)$ of degree at least two. If $f$ has good reduction at $v$, then $J_v (f)=\emptyset$ (see Lemma \ref{good reduction}). Hence, $K^{tv}$ has property $(SB)$ relative to $\widehat{h}_f$ whenever $f$ has good reduction at $v$. In particular, this is true for every $v$ if $f$ is a monic polynomial with algebraic integer coefficients. So, Theorem \ref{totv} implies the result of Bombieri and Zannier that $K^{tv}$ has the Bogomolov property relative to $h=\widehat{h}_{x^2}$.
 \end{Example}
 
 \begin{Example}\label{badreduction}
 Let $S$ be any finite set of rational primes, and denote by $k$ their product. The map $f(x)=(x^2 -x)/k$ has bad reduction at a prim $p$ if and only if $p\in S$. For $p\in S$ and $\alpha \in \mathbb{Q}_p$ with $\vert \alpha \vert_p \leq 1$, the preimages of $\alpha$ under $f$ are the solutions of the equation $x^2 -x -k\alpha=0$. Since $\vert k \alpha \vert_p \leq p^{-1}$, Hensel's Lemma yields $f^{-1}(\alpha) \subseteq \mathbb{Q}_p$. Moreover, both preimages of $\alpha$ have $p$-adic absolute value $\leq 1$.

Obviously $0$ is a fixed point of $f$. We claim that the backward orbit of $0$ under $f$ is contained in 
\[
F=\bigcap_{p\in S} \mathbb{Q}^{tp}.
\]
Let $\alpha_n \in \overline{\mathbb{Q}}$ be such that $f^{(n)}(\alpha_n)=0$. We know from the previous paragraph that $\alpha_n \in \mathbb{Q}_p$ for all $p\in S$. Since $f$ is defined over $\mathbb{Q}$, the same is true for every Galois conjugate of $\alpha_n$. We conclude that all the $\alpha_n$ are in $\mathbb{Q}^{tp}$ for all $p\in S$, proving the claim. In particular, there are infinitely many preperiodic points of $f$ in $F$. Since $f$ is a polynomial, Theorem \ref{totv} shows that all finite preperiodic points of $f$
are in $F$. Constructing such a backward orbit with $0$ replaced by any non-preperiodic element in $\mathbb{Z}$, we see that $F$ does not have property $(B)$ relative to $\widehat{h}_f$.
\end{Example}

\begin{Example}\label{lattesexample}
Let $E$ be an elliptic curve defined over the number field $K$. We may assume that $E$ is given in short Weierstra\ss{} form $y^2=x^3 + ax +b$, with $a,b\in K$.
The multiplication-by-$2$ map, denoted by $[2]$, defines a dynamical system on $E(\overline{\mathbb{Q}})$. If $\pi:E(\overline{\mathbb{Q}})\longrightarrow \mathbb{P}^1 (\overline{\mathbb{Q}})$ denotes the projection on the $x$-coordinate, there exists a rational map $f\in K(x)$ of degree $4$ such that 
\[
f(\pi(P))=\pi([2]P) \quad \text{for all } P\in E(\overline{\mathbb{Q}}).
\]
Such a map $f$ is called Latt\`es map. For details we refer the reader to \cite[Chapter 6]{Si07}. For any non-archimedean valuation $v$ on $K$, we have $J_v (f)=\emptyset$ \cite[Example 4.10]{Hs96}. Therefore, $K^{tv}$ has property $(SB)$ relative to $\widehat{h}_f$. It follows directly from the defining properties \eqref{canonical height} of the respective height functions that
\[
\widehat{h}_f \circ \pi = 2 \widehat{h}_E,
\]
where $\widehat{h}_E$ denotes the N\'eron-Tate height on $E$. Hence, $E$ has at most finitely many torsion points defined over $K^{tv}$, and there is a positive constant $c$ such that $\widehat{h}_E (P)\geq c$ for all $P\in E(K^{tv})$. Effective bounds on the number of torsion points and $c$ have been calculated in \cite[Section 6.3]{BP05}. 
\end{Example}


\section{On Narkiewicz's property $(P)$}\label{Narkiewicz}

\subsection{Field properties} In the previous section we have studied the properties $(B)$ and $(SB)$. We will start this section by recalling some more arithmetic properties on fields. The first definition is, just like the definition of the Bogomolov property, due to Bombieri and Zannier \cite{BZ01}.

\begin{Definition}\label{Northcott}
A field $F\subseteq \overline{\mathbb{Q}}$ has the \emph{Northcott property} $(N)$ if any subset of $F$ of bounded height is finite.
\end{Definition}

Northcott \cite{No50} proved that there are only finitely many points of bounded height and bounded degree in $\overline{\mathbb{Q}}$. Hence, every number field satisfies property $(N)$.
Note that there is no reason to define a Northcott property relative to some canonical height $\widehat{h}_f$. Two height functions differ only by a bounded function, and hence any set of bounded canonical height $\widehat{h}_f$ is a set of bounded height $h$ and vice versa.

Narkiewicz introduced the following definition.

\begin{Definition}\label{PropertyP}
A field $F$ has \emph{property $(P)$} if every polynomial $f(x)\in F[x]\setminus F$ such that there is an infinite set $X\subseteq F$ with $f(X)=X$, is of degree one. If a field $F$ satisfies the same statement with ``polynomial" replaced by `rational map", then we say that $F$ has \emph{property $(R)$}. 
\end{Definition}

The next definition of Liardet goes in the same direction. We denote by $\overline{F}$ a fixed algebraic closure of the field $F$.

\begin{Definition}\label{PropertyPbar} 
A field $F$ has \emph{property $(\overline{P})$} if every polynomial $f(x)\in \overline{F}[x]\setminus F$ such that there is an infinite set $X\subseteq \overline{F}$ consisting of elements of uniformly bounded degree over $F$, with $X\subseteq f(X)$, is of degree one. Again the obvious generalization for rational functions is called \emph{property $(\overline{R})$}. 
\end{Definition}

For a subfield $F$ of $\overline{\mathbb{Q}}$ we have the following implications:
\begin{align}\label{implications} 
(N)\Longrightarrow (\overline{R}) \Longrightarrow (R) \Longrightarrow (P).
\end{align}
We refer the reader to \cite[Section 6]{CW13} for a proof of this result and a more detailed discussion of these properties (see also \cite{DZ08}). The authors of \cite{CW13} only state the result $(N)\Longrightarrow (\overline{P})$ \cite[Lemma 6.7.]{CW13}, but their proof is valid for $(N)\Longrightarrow (\overline{R})$ as well. 

It is still open whether $(P)\Longleftrightarrow (R)$. Dvornicich and Zannier \cite[Theorem 3]{DZ07} gave an example of a field where property $(P)$ is not preserved under a finite extension. Since the properties $(N)$ (\cite[Theorem 2.1]{DZ08}), $(\overline{P})$ and $(\overline{R})$ (obvious) are stable under finite extensions, none of them is equivalent to $(P)$. 

For a field $F$ and any positive integer $d$ we denote by $F^{(d)}$ the compositum in $\overline{F}$ of all field extensions of $F$ of degree at most $d$. In \cite[Probl\`eme 415]{Na63} Narkiewicz (implicitly) conjectured the following (see also \cite[Problem 10]{Na71}). 

\begin{Conjecture}[Narkiewicz (1963)] \label{conj}
The field $\mathbb{Q}^{(d)}$ has property $(P)$ for all positive integers $d$. 
\end{Conjecture}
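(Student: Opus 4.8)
The plan is to deduce Conjecture \ref{conj} from Theorem \ref{totv} together with the scaling property in \eqref{canonical height}, making precise the sketch given in the introduction. There are two essentially independent ingredients: first, trapping the whole field $\mathbb{Q}^{(d)}$ inside a field of the form $K^{tv}$; second, converting the strong Bogomolov property of $K^{tv}$ relative to $\widehat{h}_f$ into a statement forbidding large orbit structures inside $\mathbb{Q}^{(d)}$.

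The first step is to show that for every rational prime $p$ there is a number field $K$ with a place $v\mid p$ such that $\mathbb{Q}^{(d)}\subseteq K^{tv}$. Fix an embedding $\overline{\mathbb{Q}}\hookrightarrow\mathbb{C}_v$. There are only finitely many subfields of $\mathbb{C}_v$ that are extensions of $\mathbb{Q}_p$ of degree at most $d$ (there being only finitely many such extensions up to isomorphism, each with finitely many embeddings into $\mathbb{C}_v$); let $M$ be their compositum, a finite extension of $\mathbb{Q}_p$. Writing $M=\mathbb{Q}_p(\theta)$ and approximating the minimal polynomial of $\theta$ over $\mathbb{Q}_p$ $p$-adically by a polynomial over $\mathbb{Q}$, Krasner's lemma produces a number field $K=\mathbb{Q}(\tilde\theta)$ with a place $v\mid p$ and $K_v=M$. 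Now if $\beta\in\overline{\mathbb{Q}}$ has degree at most $d$ over $\mathbb{Q}$, then $\beta$ and each of its $\mathbb{Q}$-conjugates generate over $\mathbb{Q}_p$ a subfield of $\mathbb{C}_v$ of degree at most $d$, hence lie in $M=K_v$; therefore $\beta\in K^{tv}$. Since such $\beta$ generate $\mathbb{Q}^{(d)}$ over $\mathbb{Q}$ and $K^{tv}$ is a field, $\mathbb{Q}^{(d)}\subseteq K^{tv}$.

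Next, given any $f\in\mathbb{Q}^{(d)}[x]$ with $\deg f\geq 2$, Lemma \ref{good reduction} lets me choose a prime $p$ at which $f$ has good reduction, so that $J_v(f)=\emptyset$ for the place $v\mid p$ of the field $K$ constructed above. By Example \ref{goodreduction} (equivalently, Theorem \ref{totv} with condition {\rm (i)} holding for $\sigma=\id$), the field $K^{tv}$ has property $(SB)$ relative to $\widehat{h}_f$; since $\mathbb{Q}^{(d)}\subseteq K^{tv}$, both the positive lower bound for $\widehat{h}_f$ on non-preperiodic points and the finiteness of $\PrePer(f)\cap\mathbb{P}^1(K^{tv})$ descend to $\mathbb{Q}^{(d)}$, so $\mathbb{Q}^{(d)}$ has property $(SB)$ relative to $\widehat{h}_f$. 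To finish, suppose $\mathbb{Q}^{(d)}$ fails property $(P)$: there is $f\in\mathbb{Q}^{(d)}[x]$ of degree at least two and an infinite $X\subseteq\mathbb{Q}^{(d)}$ with $f(X)=X$. If every element of $X$ is preperiodic, this contradicts the finiteness of $\PrePer(f)\cap\mathbb{P}^1(\mathbb{Q}^{(d)})$ just obtained. Otherwise pick $\alpha_0\in X\setminus\PrePer(f)$ and, using that $f$ maps $X$ onto $X$, choose inductively $\alpha_{n+1}\in X$ with $f(\alpha_{n+1})=\alpha_n$; by \eqref{canonical height} we get $\widehat{h}_f(\alpha_n)=\widehat{h}_f(\alpha_0)/\deg(f)^n$, a strictly decreasing sequence of positive reals tending to $0$, so the pairwise distinct $\alpha_n\in\mathbb{P}^1(\mathbb{Q}^{(d)})\setminus\PrePer(f)$ contradict the Bogomolov property. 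Hence no such $f$ exists, i.e.\ $\mathbb{Q}^{(d)}$ has property $(P)$.

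I expect the main obstacle to be the first step. It is conceptually elementary but is precisely where the defining feature of $\mathbb{Q}^{(d)}$ — uniformly bounded local degrees over $\mathbb{Q}_p$ — is used, and one must argue carefully that only the isomorphism type of each local completion matters, so that finiteness of local extensions of bounded degree genuinely suffices to confine all of $\mathbb{Q}^{(d)}$ inside a single $K^{tv}$. Once that embedding is established, the remainder is a formal combination of Theorem \ref{totv} (indeed only the good-reduction case, Example \ref{goodreduction}) with the scaling property of $\widehat{h}_f$.
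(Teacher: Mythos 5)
Your proof is correct and follows essentially the same route as the paper, just in a more direct, less abstract form. The paper packages the argument via an intermediate notion (property $(USB)$, Definition~\ref{superstrongBogomolov}): Theorem~\ref{tada} shows that any Galois extension $L/K$ with finite local degrees at infinitely many places has $(USB)$, with the key step being exactly your Krasner construction of an auxiliary number field $K(\alpha)$ such that $L(\alpha)\subseteq K(\alpha)^{tv'}$, combined with a choice of good-reduction place so that $J_{v'}(f)=\emptyset$ and Theorem~\ref{totv} applies. Lemma~\ref{Lemmatada} then transfers finiteness of local degrees to $L^{(d)}$, and Lemma~\ref{LemmaSSB} is precisely your height-scaling/backward-orbit argument, upgraded to yield $(\overline{R})$ rather than just $(P)$. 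What the paper's abstraction buys is generality (Theorem~\ref{tada} applies to any Galois extension with bounded local degrees at infinitely many places, e.g.\ the ring class field compositums of Example~\ref{RCF}) and the stronger conclusion that $\mathbb{Q}^{(d)}$ has $(USB)$ and $(\overline{R})$, not merely $(P)$. Your streamlined version isolates the case needed for Conjecture~\ref{conj} and is entirely sound: the observation that the compositum $M$ of all degree-$\le d$ extensions of $\mathbb{Q}_p$ inside $\mathbb{C}_v$ is a finite extension of $\mathbb{Q}_p$, realizable as $K_v$ for a number field $K$ with a place $v\mid p$, and that every $\mathbb{Q}$-conjugate of a degree-$\le d$ algebraic number then lands in $M$, correctly delivers $\mathbb{Q}^{(d)}\subseteq K^{tv}$.
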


Bombieri and Zannier \cite[Corollary 1]{BZ01} proved that $\mathbb{Q}^{(2)}$ has property $(N)$, and by \eqref{implications} also properties $(\overline{R})$ and $(P)$. Narkiewicz conjectured further that the statement in \ref{conj} is also true if we replace $\mathbb{Q}$ by any completely transcendental extension field. We will only focus on the case of algebraic numbers, although some of the tools used in the proof may apply in this general setting as well.

\subsection{Proof of Conjecture \ref{conj}}
In what follows we will give a positive answer to Narkiewicz's conjecture. To ease notation we will define yet another property, introducing dynamical canonical heights from \eqref{canonical height}.

\begin{Definition}\label{superstrongBogomolov} 
A field $F\subseteq \overline{\mathbb{Q}}$ has the \emph{universal strong Bogomolov property $(USB)$} if $F$ has the strong Bogomolov property relative to $\widehat{h}_f$ for all $f\in\overline{\mathbb{Q}}(x)$ with $\deg(f)\geq2$.
\end{Definition}

Of course, property $(N)$ implies property $(USB)$. The next easy lemma relates property $(USB)$ to the other properties defined above.

\begin{Lemma}\label{LemmaSSB}
Let $F\subseteq\overline{\mathbb{Q}}$ be a field with property $(USB)$. Then $F$ has property $(R)$. If also $F^{(d)}$ has property $(USB)$ for all positive integers $d$, then $F$ has property $(\overline{R})$.
\end{Lemma}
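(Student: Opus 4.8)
The plan is to use that property $(USB)$ yields, for every rational map $g$ of degree at least two, a uniform positive lower bound for $\widehat{h}_g$ on $\mathbb{P}^1(F)\setminus\PrePer(g)$ together with finiteness of $\PrePer(g)\cap\mathbb{P}^1(F)$, and to contradict these by manufacturing a backward orbit of small canonical height. For the first assertion, let $f\in F(x)\setminus F$ and let $X\subseteq F$ be infinite with $f(X)=X$; since $f$ is non-constant we have $\deg f\geq 1$, and I claim $\deg f\geq 2$ is impossible. Assume $\deg f\geq 2$. By $(USB)$ the set $\PrePer(f)\cap\mathbb{P}^1(F)$ is finite, so as $X$ is infinite I may fix $\alpha_0\in X\setminus\PrePer(f)$. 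From $f(X)=X$ (in particular $X\subseteq f(X)$) I construct recursively a sequence $(\alpha_n)_{n\geq0}$ with $\alpha_n\in X$ and $f(\alpha_{n+1})=\alpha_n$. No $\alpha_n$ is preperiodic, because an iterate of $\alpha_n$ equals $\alpha_0\notin\PrePer(f)$; hence $\widehat{h}_f(\alpha_n)>0$, and the functional equation \eqref{canonical height} gives $\widehat{h}_f(\alpha_n)=\widehat{h}_f(\alpha_0)/\deg(f)^n$, which tends to $0$ since $\deg f\geq 2$. Choosing $n$ so large that $\widehat{h}_f(\alpha_n)$ drops below the Bogomolov constant of $F$ relative to $\widehat{h}_f$ contradicts that constant, because $\alpha_n\in\mathbb{P}^1(F)\setminus\PrePer(f)$. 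Therefore $\deg f=1$, i.e.\ $F$ has property $(R)$.

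For the second assertion, assume in addition that $F^{(d)}$ has property $(USB)$ for every positive integer $d$; note $\overline{F}=\overline{\mathbb{Q}}$ since $F\subseteq\overline{\mathbb{Q}}$. Let $f\in\overline{\mathbb{Q}}(x)\setminus F$ and let $X\subseteq\overline{\mathbb{Q}}$ be infinite with $X\subseteq f(X)$ and $[F(\alpha):F]\leq d$ for all $\alpha\in X$, for some fixed positive integer $d$; suppose $\deg f\geq 2$. The finitely many coefficients of $f$ generate a finite extension $F'$ of $F$, say of degree $e$, so that $f\in F'(x)$; set $m=\max(d,e)$, again a positive integer. Every $\alpha\in X$ lies in an extension of $F$ of degree $\leq m$, so $X\subseteq F^{(m)}$, and likewise $F'\subseteq F^{(m)}$, so $f\in F^{(m)}(x)$. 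Now the argument of the previous paragraph applies verbatim with $F$ replaced by $F^{(m)}$: by $(USB)$ for $F^{(m)}$ there is $\alpha_0\in X\setminus\PrePer(f)$, from $X\subseteq f(X)$ one builds a backward orbit $(\alpha_n)\subseteq X\subseteq F^{(m)}$, and $0<\widehat{h}_f(\alpha_n)=\widehat{h}_f(\alpha_0)/\deg(f)^n\to 0$ contradicts the Bogomolov property of $F^{(m)}$ relative to $\widehat{h}_f$. Hence $\deg f=1$, i.e.\ $F$ has property $(\overline{R})$.

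Once the backward-orbit idea is in place the proof is bookkeeping; the single place where the \emph{strong} Bogomolov property is essential, rather than merely $(B)$, is the choice of a non-preperiodic starting point $\alpha_0$: without finiteness of $\PrePer(f)\cap\mathbb{P}^1(F)$ the whole backward orbit could consist of preperiodic points and the conclusion would genuinely fail (for instance over $\mathbb{Q}^{ab}$ with $f=x^2$ and $X$ the roots of unity, where $f(X)=X$). The only mild technical care, and the closest thing to an obstacle, is in the second part: one must pick the index $m$ so that a single field $F^{(m)}$ simultaneously contains the set $X$ and all coefficients of $f$, which is exactly what $m=\max(d,e)$ achieves — here it is crucial that the degrees of the elements of $X$ over $F$ are \emph{uniformly} bounded and that $f$ has only finitely many coefficients.
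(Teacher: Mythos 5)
Your proof is correct and follows essentially the same backward-orbit argument as the paper: pick a non-preperiodic $\alpha_0\in X$ (possible because $(USB)$ gives finiteness of $\PrePer(f)$ in the field), construct a backward orbit whose canonical heights tend to zero, and contradict the Bogomolov bound. The one cosmetic difference is that you enlarge $m$ to $\max(d,e)$ so that $f\in F^{(m)}(x)$, but this is unnecessary --- the properties $(B)$, $(SB)$, $(USB)$ relative to $\widehat{h}_f$ are defined for arbitrary $f\in\overline{\mathbb{Q}}(x)$ with no requirement that $f$ have coefficients in the field in question, so $m=d$ would have sufficed, which is exactly how the paper handles it.
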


\begin{proof} Let $F\subseteq \overline{\mathbb{Q}}$ be a field such that $F^{(d)}$ has property $(USB)$ for some $d\in \mathbb{N}$. Moreover, let $f\in\overline{\mathbb{Q}}(x)$ be of degree at least two, and $X \subseteq \overline{\mathbb{Q}}$ a non-empty set of points of degree at most $d$ over $F$ with $X\subseteq f(X)$. Assume there is a non-preperiodic point $\alpha_0 \in X$. Then $X\subseteq f(X)$ implies the existence of a sequence $\alpha_1,\alpha_2,\dots $ of pairwise distinct elements in $X$ such that $f(\alpha_i)=\alpha_{i-1}$ for all $i \in \mathbb{N}$. In particular, we get 
\[
0\neq \widehat{h}_f (\alpha_n)= \frac{1}{\deg(f)^n} \widehat{h}_f (\alpha_0) \longrightarrow 0 \quad \text{as } n\longrightarrow \infty.
\]
Hence, $F^{(d)}$ contains points of arbitrarily small canonical height $\widehat{h}_f$, which contradicts our hypothesis that $F^{(d)}$ has property $(USB)$. Therefore, all points of $X$ are preperiodic for $f$. But property $(USB)$ of $F^{(d)}$ also implies that there are only finitely many preperiodic points of $f$ in $X\subseteq F^{(d)}$, and hence $X$ is finite.
The case $d=1$ proves the first statement of the lemma. \end{proof}

\begin{Theorem}\label{tada}
Let $K$ be a number field and $S$ be an infinite set of non-archimedean valuations on $K$. If $L/K$ is a Galois extension such that $d_{w\mid v}=[L_w:K_v]$ is finite for all $v \in S$ and $w\mid v$ on $L$, then $L$ has property $(USB)$.
\end{Theorem}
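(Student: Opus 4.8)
The plan is to reduce Theorem \ref{tada} to Theorem \ref{totv} via a Baire-category / pigeonhole argument over the infinite set $S$. Fix a rational map $f \in \overline{\mathbb{Q}}(x)$ of degree at least two; I want to show $L$ has property $(SB)$ relative to $\widehat h_f$. Enlarging $K$ if necessary (replacing $K$ by a number field containing the coefficients of $f$ and intersecting $L$ with the new field — one has to check the hypotheses on $d_{w\mid v}$ are inherited, which is routine since these local degrees only shrink), I may assume $f$ is defined over $K$ and $L \subseteq K^{tv}$ for each $v \in S$, because the condition $[L_w:K_v] < \infty$ for all $w \mid v$ says exactly that $L$ embeds into $K_v$ after completing, i.e. $L \subseteq K^{tv}$. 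So it suffices to find one $v \in S$ for which $K^{tv}$ has property $(SB)$ relative to $\widehat h_f$, and then quote Theorem \ref{totv}.

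Next I would show that for all but finitely many $v \in S$, condition (i) of Theorem \ref{totv} holds for $f$ at $v$, i.e. $J_v(\sigma(f)) \not\subseteq \mathbb{P}^1(K_v)$ or $J_v(\sigma(f)) = \emptyset$ for some $\sigma \in \Gal(\overline{\mathbb{Q}}/K)$. Since $f$ is now defined over $K$, we have $\sigma(f) = f$, so I just need $J_v(f) \not\subseteq \mathbb{P}^1(K_v)$ or $J_v(f) = \emptyset$. By Lemma \ref{good reduction}, $f$ has bad reduction at only finitely many rational primes; at every other non-archimedean $v$ we get $J_v(f) = \emptyset$ outright. Since $S$ is infinite, it contains valuations lying over infinitely many distinct rational primes, hence it contains some $v$ at which $f$ has good reduction, giving $J_v(f) = \emptyset$ and therefore condition (i) of Theorem \ref{totv}. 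Then Theorem \ref{totv} yields that $K^{tv}$ — and hence its subfield $L$ — has property $(SB)$ relative to $\widehat h_f$ (finiteness of $\PrePer(f) \cap \mathbb{P}^1(L)$ and a uniform positive lower bound on $\widehat h_f$ away from the preperiodic points both pass to subfields). Since $f$ was arbitrary, $L$ has property $(USB)$.

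The one place that requires genuine care — the main obstacle — is the reduction step where I enlarge $K$ to a field $K'$ containing the coefficients of $f$: I must verify that the hypothesis survives, namely that $S$ can be replaced by an infinite set $S'$ of non-archimedean valuations of $K'$ with $[L'_{w'} : K'_{v'}]$ finite, where $L' = L \cdot K'$ (or $L'$ = the Galois closure over $K'$ of $L\cdot K'$). This is a standard but slightly fiddly compositum-of-completions argument: each $v \in S$ has finitely many extensions $v'$ to $K'$, and for $w' \mid v'$ on $L'$ one bounds $[L'_{w'}:K'_{v'}] \le [L_w : K_v]\,[K': K]$ by comparing with the local field $L_w K'_{v'}$. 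Choosing one $v'$ above each $v \in S$ gives the required infinite $S'$. Alternatively — and more cleanly — one can avoid enlarging $K$ at all: keep $f \in \overline{\mathbb{Q}}(x)$ general, apply Lemma \ref{good reduction} directly over $\mathbb{C}_v$ to get that $J_v(\sigma(f))$ is empty for all $\sigma$ whenever $v$ lies over a rational prime of good reduction for the finitely many conjugates of $f$ that matter, note $S$ meets infinitely many such primes, pick such a $v \in S$, observe $L \subseteq K^{tv}$ because the finiteness of all local degrees $[L_w:K_v]$ is precisely the statement that $L$ embeds into $K_v$, and invoke Theorem \ref{totv} for that $v$. I would write up this second route, with the compositum bookkeeping relegated to a remark, since it keeps the argument to essentially two invocations of already-proved results.
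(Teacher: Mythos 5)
There is a genuine gap. You claim — twice — that the finiteness of $[L_w:K_v]$ for all $w \mid v$ ``says exactly that $L$ embeds into $K_v$ after completing, i.e.\ $L \subseteq K^{tv}$.'' This is false: $L \subseteq K^{tv}$ means precisely that $[L_w:K_v]=1$ for every $w\mid v$, not merely that these local degrees are finite. As a concrete counterexample, take $K=\mathbb{Q}$, $L=\mathbb{Q}(\sqrt{2})$ and $v=5$: the prime $5$ is inert in $L$, so $[L_w:\mathbb{Q}_5]=2$ is finite, yet $\sqrt{2}\notin\mathbb{Q}_5$, so $L\not\subseteq \mathbb{Q}^{t5}$. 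Your argument thus collapses at the moment you try to locate $L$ inside $K^{tv}$, and the enlargement you do perform (adjoining coefficients of $f$) addresses an unrelated and in fact unnecessary concern, since Theorem \ref{totv} already allows $f\in\overline{\mathbb{Q}}(x)$ with no rationality hypothesis on the coefficients.

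The correct fix — and the heart of the paper's proof — is a \emph{different} base change, designed to kill the local degree rather than to rationalize $f$. After choosing $v\in S$ of good reduction for $f$ (this part of your argument is exactly right), one uses Krasner's Lemma to find $\alpha\in\overline{\mathbb{Q}}$ with $L_w=K_v(\alpha)$; then one replaces $K$ by $K(\alpha)$ and $L$ by the compositum $L(\alpha)$. Choosing a place $w'$ of $L(\alpha)$ over $w$ and setting $v'=w'|_{K(\alpha)}$, one gets $L(\alpha)\subseteq L(\alpha)_{w'}=L_w=K_v(\alpha)=K(\alpha)_{v'}$, so $L(\alpha)\subseteq K(\alpha)^{tv'}$, and $f$ still has good reduction at $v'\mid v$, so $J_{v'}(f)=\emptyset$. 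Now Theorem \ref{totv} applies over $K(\alpha)$, giving $(SB)$ for $L(\alpha)\supseteq L$, and one concludes as you do. Your ``cleaner'' second route omits this base change entirely and therefore cannot work as written; the Krasner step is not bookkeeping that can be relegated to a remark but is the essential mechanism by which a finite local degree is converted into a trivial one.
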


\begin{proof} Let $L/K$ be as above and let $f\in \overline{\mathbb{Q}}(x)$ be of degree at least $2$. By Lemma \ref{good reduction}, $f$ has bad reduction only above finitely many primes. Therefore we can fix a $v\in S$ where $f$ has good reduction.

Since the extension $L/K$ is Galois, the local degree $d_v=[L_w : K_v]$ does not depend on the choice of $w\mid v$. We fix such a $w$. Since $d_v$ is finite, a standard application of Krasner's Lemma yields the existence of an $\alpha \in \overline{\mathbb{Q}}$ sucht that $L_w = K_v (\alpha)$.
Let $w'$ be a valuation on $L(\alpha)$ with $w' \vert_{L} = w$, and set $v' = w' \vert_{K(\alpha)}$. Then $L(\alpha)/K(\alpha)$ is Galois and 
\[
L(\alpha)\subseteq L(\alpha)_{w'} = L_w =K_v (\alpha) = K(\alpha)_{v'}.
\]
The map $f$ has good reduction at $v'\mid v$, implying that $J_{v'}(f)=\emptyset$ (see Lemma \ref{good reduction}). Thus, by Theorem \ref{totv}, the field $L(\alpha) \supseteq L$ has property $(SB)$ relative to $\widehat{h}_f$.
As the rational function $f$ was chosen arbitrarily, $L$ has property $(USB)$.
\end{proof}

\begin{Remark}
We have seen in Example \ref{badreduction} $b)$ that the conclusion of the theorem is false for any finite set $S$ of non-archimedean absolute values.
\end{Remark}

\begin{Lemma}\label{Lemmatada}
Let $L/K$ be as in Theorem \ref{tada} and $d$ any positive integer. Then also $L^{(d)}$ is of this type, i.e. a Galois extension of $K$ with finite local degrees at all places $v$ in the infinite set $S$.
\end{Lemma}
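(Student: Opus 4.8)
The plan is to verify directly the two properties that make $L^{(d)}$ ``of this type'': that $L^{(d)}/K$ is Galois, and that $[(L^{(d)})_w:K_v]$ is finite for every $v\in S$ and every place $w$ of $L^{(d)}$ above $v$ (for the same infinite set $S$). For the Galois property, I would first note that $L^{(d)}/L$ is Galois: if $M/L$ has degree at most $d$ then so does $\sigma(M)/L$ for every $\sigma\in\Gal(\overline{\mathbb{Q}}/L)$, hence the compositum $L^{(d)}$ is stable under $\Gal(\overline{\mathbb{Q}}/L)$, so it is a normal (and, in characteristic zero, separable) extension of $L$. Upgrading to $L^{(d)}/K$ is equally formal: any $\tau\in\Gal(\overline{\mathbb{Q}}/K)$ fixes $L$ setwise because $L/K$ is Galois, and carries a degree-at-most-$d$ extension of $L$ to another one, so $\tau(L^{(d)})=L^{(d)}$; thus $L^{(d)}/K$ is normal and separable, i.e.\ Galois.

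The real content is the finiteness of the local degrees. Fix $v\in S$, say $v\mid p$, and a place $w$ of $L^{(d)}$ above $v$; choose an embedding $\iota\colon\overline{\mathbb{Q}}\hookrightarrow\mathbb{C}_v$ inducing $w$ and put $u=w|_L$. By hypothesis $[L_u:K_v]$ is finite, so $L_u$ is a finite extension of $\mathbb{Q}_p$. As in the proof of Theorem~\ref{tada}, a standard application of Krasner's Lemma shows that a $p$-adic field has only finitely many extensions of degree at most $d$ inside a fixed algebraic closure; let $N\subseteq\mathbb{C}_v$ be the compositum of all extensions of $L_u$ of degree at most $d$. Then $N$ is a finite extension of $L_u$, hence $[N:K_v]<\infty$. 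Now every element of $L^{(d)}$ lies in $L(\beta_1,\dots,\beta_r)$ for finitely many $\beta_j$, each of degree at most $d$ over $L$, so each $\iota(\beta_j)$ is a root of a polynomial of degree at most $d$ over $L_u\supseteq\iota(L)$ and therefore $L_u(\iota(\beta_j))\subseteq N$. Hence $\iota(L^{(d)})\subseteq N$, and since $N$ is complete it is closed in $\mathbb{C}_v$, so it contains the closure of $\iota(L^{(d)})$, which is $(L^{(d)})_w$. Consequently $[(L^{(d)})_w:K_v]\le[N:K_v]<\infty$, and since $v\in S$ and $w\mid v$ were arbitrary, $L^{(d)}$ is a Galois extension of $K$ with finite local degrees at every place of $S$, as required.

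I expect the only step requiring genuine care to be the input that a non-archimedean local field has finitely many extensions of bounded degree, together with the bookkeeping that the completion $(L^{(d)})_w$ lands inside the fixed finite extension $N$ of $L_u$; the former is precisely the Krasner argument already invoked for Theorem~\ref{tada}, so it is available. Everything else is routine field theory.
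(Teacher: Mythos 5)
Your proof is correct and follows essentially the same route as the paper, which simply cites \cite[Theorem 2.1]{CW13} for the Galois property and \cite[Proposition 1]{BZ01} for the finiteness of local degrees (the latter resting, exactly as you say, on the fact that a non-archimedean local field has only finitely many extensions of bounded degree). You have merely unwound the cited arguments into explicit detail; no genuinely different idea is involved.
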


\begin{proof} The proof that $L^{(d)}/K$ is Galois is easy and proceeds as in \cite[Theorem 2.1]{CW13}. The finiteness of local degrees is also well known and follows as in \cite[Proposition 1]{BZ01} from the fact that for all non-archimedean places $v$ on $K$ there are only finitely many extensions of $K_v$ of fixed degree. \end{proof}

The following immediate consequence proves Conjecture \ref{conj}.

\begin{Corollary}\label{CorNar}
The field $\mathbb{Q}^{(d)}$ has properties $(USB)$ and $(\overline{R})$.
\end{Corollary}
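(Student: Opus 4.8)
The plan is to deduce Corollary~\ref{CorNar} by specializing the general machinery built up in Theorem~\ref{tada} and Lemma~\ref{Lemmatada} to the concrete tower $K = \mathbb{Q}$ and $L = \mathbb{Q}^{(d)}$. First I would verify that $\mathbb{Q}^{(d)}/\mathbb{Q}$ is a Galois extension with finite local degrees at (almost) all non-archimedean places; this is exactly the content of Lemma~\ref{Lemmatada} applied with $K = \mathbb{Q}$ and $L = \mathbb{Q}$ itself, since $\mathbb{Q}$ is trivially a Galois extension of $\mathbb{Q}$ with local degree $1$ everywhere, so $S$ may be taken to be the set of all non-archimedean places of $\mathbb{Q}$, which is infinite. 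Then $\mathbb{Q}^{(d)} = \mathbb{Q}^{(d)}$ is of the type considered in Theorem~\ref{tada}, and hence has property $(USB)$.

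For property $(\overline{R})$, I would invoke the second assertion of Lemma~\ref{LemmaSSB}: it suffices to know that $(\mathbb{Q}^{(d)})^{(e)}$ has property $(USB)$ for every positive integer $e$. But $(\mathbb{Q}^{(d)})^{(e)}$ is again a compositum of number fields of bounded degree over $\mathbb{Q}$ (of degree at most $de$), hence it is contained in $\mathbb{Q}^{(de)}$ --- or, more cleanly, Lemma~\ref{Lemmatada} applied to $L = \mathbb{Q}^{(d)}$ shows directly that $(\mathbb{Q}^{(d)})^{(e)}/\mathbb{Q}$ is Galois with finite local degrees at all non-archimedean places, so Theorem~\ref{tada} again gives property $(USB)$. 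Feeding this into Lemma~\ref{LemmaSSB} yields property $(\overline{R})$ for $\mathbb{Q}^{(d)}$, and then the chain of implications \eqref{implications} gives property $(P)$, proving Conjecture~\ref{conj}.

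The only point requiring the slightest care is making sure the hypotheses of Theorem~\ref{tada} and Lemma~\ref{Lemmatada} are literally met with $K=\mathbb{Q}$: namely that the relevant set $S$ of places can be chosen infinite (it can --- take all finite places) and that the tower $\mathbb{Q}^{(d)}/\mathbb{Q}$, and then $(\mathbb{Q}^{(d)})^{(e)}/\mathbb{Q}$, is Galois. The Galois property is standard (the compositum of all conjugates of a degree-$\le d$ field is again a compositum of degree-$\le d'$ fields for a suitable $d'$, so $\mathbb{Q}^{(d)}$ is stable under $\Gal(\overline{\mathbb{Q}}/\mathbb{Q})$); the finiteness of local degrees follows, as noted in the proof of Lemma~\ref{Lemmatada}, from Krasner's Lemma together with the finiteness of the number of extensions of $\mathbb{Q}_p$ of a given degree. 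I do not anticipate a genuine obstacle here: the corollary is labelled "immediate" precisely because all the substantive work --- the equidistribution input via Theorem~\ref{v-adic Bogomolov}, the Rivera--Letelier structure theory behind Theorem~\ref{totv}, and the good-reduction reduction step in Theorem~\ref{tada} --- has already been carried out. The proof is therefore a short bookkeeping argument assembling Theorem~\ref{tada}, Lemma~\ref{Lemmatada}, Lemma~\ref{LemmaSSB}, and \eqref{implications}.
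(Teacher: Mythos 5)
Your proof is correct and follows the same route as the paper: Lemma \ref{Lemmatada} (starting from $L=\mathbb{Q}$, $S$ the set of all primes) and Theorem \ref{tada} give $(USB)$ for $\mathbb{Q}^{(d)}$, and iterating to get $(USB)$ for $(\mathbb{Q}^{(d)})^{(e)}$ together with Lemma \ref{LemmaSSB} gives $(\overline{R})$ --- this last step via Lemma \ref{LemmaSSB} is left implicit in the paper's one-line proof, so you are right to spell it out. One minor caveat: the offhand inclusion $(\mathbb{Q}^{(d)})^{(e)} \subseteq \mathbb{Q}^{(de)}$ is not obviously true (an element of degree $\le e$ over $\mathbb{Q}^{(d)}$ has a minimal polynomial whose coefficients may generate a subfield of $\mathbb{Q}^{(d)}$ of arbitrarily large degree over $\mathbb{Q}$, so nothing immediately bounds the element's degree over $\mathbb{Q}$ by $de$), but this is harmless since you immediately replace it with the correct argument via Lemma \ref{Lemmatada} applied to $L=\mathbb{Q}^{(d)}$.
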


\begin{proof} This follows from Lemma \ref{Lemmatada} and Theorem \ref{tada}, if we start with the field $L=\mathbb{Q}$ and the set of all primes $S$.
\end{proof}

\begin{Example}\label{RCF}
We will present a new class of examples satisfying property $(\overline{R})$, using Theorem \ref{tada}. Let $K$ be an imaginary quadratic number field and $\mathcal{O}_K$ the ring of integers of $K$. For simplicity we assume that no non-trivial roots of unity are contained in $K$. We set
\[
S'=\{p \text{ prime }\vert p \text{ is inert in } \mathcal{O}_K \}. 
\] 
Let $S\subseteq S'$ be a non-empty subset of $S'$. For all $p\in S'\setminus S$ and all $n\in\mathbb{N}$ we denote by $K_{p^n}$ the ring class field associated to the order $\mathbb{Z}+p^n \mathcal{O}_K$. We have $K_{p^n}\subseteq K_{p^{n+1}}$ for all $n\in\mathbb{N}$, and hence 
\[
K[p]= \bigcup_{n\in\mathbb{N}}K_{p^n}
\]
is a field, and in particular a Galois extension of $K$. Let $\ell\in S$. By definition $\ell \mathcal{O}_K$ is a prime ideal. It follows from the construction of ring class fields that this prime ideal splits completely in $K[p]$. This means that
\[
K[p] \subseteq K^{t\ell} \text{ for all } p \in S' \setminus S \text{ and all } \ell \in S. 
\]
Define $F_S$ as the compositum of all fields $K[p]$, $p\in S'\setminus S$. Then $F_S$ is a Galois extension of $K$ with $F_S \subseteq K^{t\ell} \text { for all } \ell \in S.$
We apply Theorem \ref{tada} to see that $F_S$ has property $(USB)$ whenever $S$ is infinite. Moreover, Lemma \ref{Lemmatada} also implies that such a field $F_S$ has property $(\overline{R})$.

A classical result of Hasse \cite{Ha31} tells us that for almost all $p\in S' \setminus S$, the ramification index of $K_{p^n}/K$ at $p$ is at least $p^n$.  Therefore, $F_S / K$ has infinite local degree at almost all primes $p \in S' \setminus S$.
\end{Example}

\subsection{Open Problems}
Let $p_1,p_2,\dots $ be distinct primes, and $d_1,d_2,\dots $ natural numbers such that $\vert p_i ^{1/d_i} \vert $ tends to infinity as $i\rightarrow \infty$. Widmer \cite[Corollary 2]{Wi11} proved that the field
\[
F=\mathbb{Q}(p_1^{1/d_1}, p_2^{1/d_i},\dots)
\] 
has property $(N)$, and hence property $(USB)$. This field is either contained in some $\mathbb{Q}^{(d)}$ (if the $d_i$ are uniformly bounded), or has infinite local degree at all primes. So, Theorem \ref{tada} cannot give a full classification of all fields with property $(USB)$. Notice that in the case of unbounded numbers $d_i$, the Galois closure of F over $\mathbb{Q}$ contains infinitely many roots of unity. In that case $F$ has none of the properties $(N)$, $(P)$ or $(USB)$. 

\begin{question}\label{question1}
Do all fields $F\subseteq\overline{\mathbb{Q}}$ with property $(USB)$ which are Galois extensions of some number field $K$ have finite local degree at infinitely many primes of $K$?
\end{question}

As Dvornicich and Zannier \cite[Theorem 3]{DZ07} have constructed a Galois extension of $\mathbb{Q}$ with infinite local degree at all primes and satisfying property $(P)$ but not $(N)$, Question \ref{question1} is strongly related to the next one.

\begin{question}\label{question2}
Is property $(USB)$ equivalent to $(N)$ or $(P)$?
\end{question}

By Corollary \ref{CorNar} a proof of the equivalence of $(N)$ and $(USB)$ would answer Bombieri's and Zannier's question whether $\mathbb{Q}^{(d)}$ has property $(N)$ for all integers $d$.  At the moment it seems to be out of reach to decide whether all fields with bounded local degrees above all primes have property $(N)$. In order to give an answer to Question \ref{question2}, but also independently of it, we ask:

\begin{question}
Let $F_S$ be as in Example \ref{RCF} with $S$ an infinite set of primes. Does $F_S$ satisfy property $(N)$?
\end{question}

Note that $F_S$ has property $(USB)$ but (in the notation of Example \ref{RCF}) there are infinitely many primes of $K$ where $F_S / K$ has infinite local degree, whenever $S$ and $S' \setminus S$ are infinite sets.

\subsection*{Acknowledgments} This work was supported by the DFG-Projekt `Heights and unlikely intersections" HA~6828/1-1. I am grateful to Paul Fili for helpful discussions and for answering a question on Berkovich spaces. Moreover, I would like to thank Liang-Chung Hsia for providing the reference \cite{Hs96}, and Philipp Habegger for pointing out the relevant properties of ring class fields, leading to Example \ref{RCF}.

\end{document}